\crefname{hypothesis}{Hypothesis}{Hypotheses}
\numberwithin{figure}{section}
\numberwithin{table}{section}
\title{Efficient adaptive randomized   algorithms   for fixed-threshold low-rank matrix approximation\thanks{
Submitted to the editors DATE.
\funding{The research is supported  by     Natural Science Foundation of Shanghai under grant 23ZR1422400 and  Large-scale Numerical Simulation Computing Sharing Platform of Shanghai University}}
}
\author{ Qiaohua Liu\thanks{Corresponding author. Department of Mathematics, Shanghai University and Newtouch Center for Mathematics of Shanghai University, Shanghai 200444, People's Republic of China(\email{qhliu@shu.edu.cn}).}
\and Yuejuan Yu\thanks{Department of Mathematics, Shanghai University, Shanghai 200444, People's Republic of China.}
}
\begin{document}
\maketitle

\begin{abstract} The low-rank matrix approximation problems  within a threshold    are widely applied in
information retrieval, image processing, background estimation of the video sequence problems and so on.
This paper presents an adaptive randomized rank-revealing algorithm of the data matrix $A$, in which
the basis matrix $Q$ of the approximate range space is adaptively  built block by block, through a recursive deflation procedure
on $A$.   Detailed analysis of randomized projection schemes are provided to analyze the numerical rank reduce during the deflation. The provable spectral and Frobenius error
$(I-QQ^T)A$ of the approximate
 low-rank matrix  $\tilde A=QQ^TA$  are presented, as well as  the approximate singular values.
This blocked deflation technique is  pass-efficient and can accelerate  practical  computations of large matrices.
Applied to image processing and background estimation problems, the blocked randomized algorithm   behaves more reliable and more
efficient than the known Lanczos-based method and a rank-revealing algorithm proposed by  Lee, Li and Zeng (in
SIAM J. Matrix Anal. Appl. 31 (2009), pp. 503-525).

\end{abstract}

\begin{keywords}
randomized low-rank approximation; rank-revealing; numerical range; adaptive; background estimation
\end{keywords}

\begin{AMS}
   68W20, 60B20, 15A18
\end{AMS}

\section{Introduction} \label{intro-sec}

 In many   image processing, information retrieval  and scientific computing applications,  it is of great importance to compute a low-rank approximation
 to a large data matrix.
 Usually, the problem of low-rank matrix approximation falls into two categories:

 $\bullet$ The fixed-rank problem, where the rank parameter $k$ is given;

 $\bullet$ The fixed-threshold, where the approximation error measured in Frobenius norm or spectral norm    might be as small as possible   within a given accuracy tolerance.

The singular value decomposition (SVD,\cite{gv2}) is undoubtedly the most reliable method for determining the optimal  approximation.
 To be precise, given
 an $m\times n$ ($m\ge n$) real  matrix $A$ with the    SVD:
\begin{equation}
 A=U\Sigma V^T=[U_{(k)}\quad U_{(k)\bot}]\left[\begin{array}{cc} \Sigma_{(k)}&\\ & \Sigma_{(k)\bot}\end{array}\right]\left[\begin{array}c V_{(k)}^T\\ V_{(k)\bot}^T\end{array}\right],\label{1.0}
 \end{equation}
where $\Sigma_{(k)}\in {\mathbb R}^{k\times k}, \Sigma_{(k)\bot}\in {\mathbb R}^{(m-k)\times (n-k)}$, the columns of $U_{(k)}$ and $U_{(k)\bot}$ are the corresponding left singular vectors, and the columns of $V_{(k)}$ and $V_{(k)\bot}$  are the corresponding right singular vectors. If we arrange   the singular values  of $A$  in decreasing order, then the optimal rank-$k$ approximation $A_k$ is given by
\begin{equation}
 A_k=U_{(k)}\Sigma_{(k)}V_{(k)}^T,\label{1.1}
 \end{equation}
 If further there is a significant gap between $\sigma_k$ and $\sigma_{k+1}$ such that
\begin{equation}
 \sigma_1\ge \sigma_2\ge \cdots\ge \sigma_k>\theta>\sigma_{k+1}\ge \cdots\ge \sigma_n,\label{1.2}
 \end{equation}
then, with the given threshold  $\theta$,   $A_k$ is also the optimal approximation matrix  such that $\|A-A_k\|_2=\|(I-U_{(k)}U_{(k)}^T)A\|_2<\theta$.
Within the threshold $\theta$,  the span of the columns of $U_{(k)}$ is denoted as the numerical range  ${\cal R}_\theta (A)$ of $A$, and $k$ is called the numerical rank of $A$.
We refer to  it as  ${\rm rank}_\theta(A)=k$.

Designing good matrix approximations with a given threshold $\theta$ arises in applications such as  recommendation systems, face recognitions, for which a convex optimization problem \cite{ccs}
\begin{equation}
\min_X {1\over 2}\|X-A\|_F^2+\tau \|X\|_*\label{1.3}
\end{equation}
needs to be solved,
where the nuclear norm $\|X\|_*$  is defined by the sum of singular values of $X$.  The optimal
approximation
\begin{equation}
\hat X={\sf shrink}({A}, \tau):=U{\mathscr S}_\tau(\Sigma)V^T\label{shrk}
\end{equation}
 can be obtained through the   the singular value thresholding (SVT) method, where
${\mathscr S}_\tau(\Sigma)$ is the element-wise application of the soft-thresholding operator
defined as
\begin{equation}
{\mathscr S}_\tau(x)={\sf sgn}(x){\rm max}(|x|-\tau, 0).\label{0002}
\end{equation}

For  matrix approximation and SVT problems, the truncated SVD is optimal but very expensive when the data matrix is large.
One can apply  the iterative Lanczos algorithm \cite{la,ws} ({\sf lansvd}) to compute
only the $k$ dominant singular values and singular vectors of large matrices. The {\sf lansvd}  can be implemented via a well known high-quality package  PROPACK \cite{la}. However, PROPACK cannot automatically compute only those singular values exceeding
the threshold $\theta$. The goal of this paper is to develop
adaptive matrix approximation algorithms with a given threshold. The approximate numerical range and numerical rank can also be
derived from the proposed algorithm.

In the literature, the latest study on adaptive matrix approximation problems with a  fixed-threshold are randomized
algorithms given in \cite{hmt,mv,ygl}.  Randomized methods  \cite{gu,hmt,llj,lwm,ma, mart,mv,ygl,rlb,rxb,wlr} for low-rank matrix approximations have attracted significant attention in the past decade.  Compared to traditional deterministic methods,  randomized methods are more efficient in that  the algorithms are based on the matrix-matrix products and QR factorizations that have been highly optimized for maximum efficiency on modern serial and parallel architectures.
 The  prototype of randomized algorithms in \cite{hmt} uses random projection to produce a kind of
 $QB$ {\it approximation}:
  $A\approx QB,$    where
$Q$ is  an $m\times (k+p)$ orthonormal basis of the range of $Y=A\Omega$. Here $\Omega$ is an $n\times (k+p)$  random Gaussian matrix, $p$ is an oversampling parameter to enhance the approximation quality of ${\cal R}(Q)$ to the dominant subspace of  $A$,  and  $B=Q^TA$ is a $(k+p)\times n$ matrix. Then, standard factorization, e.g., SVD, can be further performed on the smaller matrix $B$, to obtain a low-rank approximation of $A$. Usually, the numerical $k$ is assumed to be known in advance.

 An adaptive randomized range finder (ARRF)  \cite{hmt} uses a high-probability residual spectral norm estimate of  $(I-QQ^T)A$:
\[
\|(I-QQ^T)A\|_2\le 10 \sqrt{2\over \pi}\max_{i=1,\ldots,r} \|(I-QQ^T)A\omega^{(i)}\|_2
\]
such that  $\|(I-QQ^T)A\|_2<\theta$,  where  $\{\omega^{(i)}, i=1, 2,\ldots, {r}\}$
is  an independent family  of standard Gaussian vectors.
 Two blocked variants of  the randomized QB ({\sf randQB}) by \cite{mv,ygl}
are essentially the blocked Gram-Schmidt scheme of $A\Omega$, by building $Q:=[Q~~Q_i]$ incrementally and  measuring  the approximation error via explicitly/implicitly  maintaining the residual matrix $A:=A-Q_iB_i$ with $B_i:=Q_i^TA$ until $\|A\|_F$ is smaller than the given precision. Through various experimental results, this scheme improves the non-ideal situation of ARRF \cite{hmt}  for estimating the approximation error and the computed rank.
Nevertheless, there are some applications that the data matrix is contaminated by the noise  and  $A$ might have a ``heavy-tail'' of
singular values.  For example, a matrix with its top $k$ squared singular values all equal to 10 followed by a tail of smaller
singular values (e.g. 1000$k$ at 1)  makes the optimal spectral  approximation error $\|A-A_k\|_2^2=1$, while the Frobenius error $\|A-A_k\|_F^2=1000k$ might be very large.
  This renders meaningless of the stopping criterion in terms of the Frobenius norm.

In addition to the mentioned algorithms in \cite{hmt,mv,ygl}, there are two existing codes for rank-revealing algorithms for some particular singular values approximations:
the UTV tools implemented by Fierro, Hansen and Hansen \cite{hh}, and  a rank-revealing (RankRev) package proposed by Lee, Li and Zeng \cite{llz,llz2}.
The {\sf lulv} in UTV and {\sf larank} in RankRev are applicable for  low-rank approximation problems.
 Both {\sf lulv} and {\sf larank}
calculate some particular singular vectors  approximation by performing  the power iteration on $LL^T$ (or $AA^T$) with a random initial vector, while the former
computes a thin factorization $A=ULV^T$ and  maintains the unitary or triangular  structures of   $U, L, V$ with dimensions not less than $n\times n$ from a to z,  and
the later {\sf larank}  only computes the basis of the numerical range space, and can be capable
of handling larger matrices due to much less memory requirement and computational cost.
The idea of {\sf larank}  first finds   a coarse approximation  $\tilde u_1$  of the left singular vector $u_1$, followed by the  computation of a deflated matrix $\tilde A=A-\tilde u_1\tilde u_1^TA$.  The rank of the deflated matrix will be reduced by 1 in each deflation, and the application of the power scheme on $\tilde A\tilde A^T$
 yields a unit vector $\tilde u_2$ in ${\cal R}_\theta(A)$ with $\tilde u_2\bot \tilde u_1$.
Applying the deflation process recursively,
  approximate singular values above the threshold can be obtained one by one,   along with
the approximate rank of the matrix and an orthonormal basis for the approximate
range space. However, no provable results of approximation error and  estimated singular values are provided.

{\it A. Our contributions}

Motivated by the work in \cite{llz} and the frame work of the randomized QB algorithm, we aim to develop a blocked randomized rank-revealing algorithm within  a given threshold $\theta$. In the algorithm, the  residual matrix $A:=(I-Q_iQ_i^T)A$ is implicitly formed and the approximate numerical
range space is characterized by
\[
{\cal R}_\theta(A)=\mbox{span}\{Q_1, Q_2,\ldots, Q_s\},
\]
 where each $Q_j$ for $j<s$ has $b$ columns, while the columns of $Q_s$ depend on the properties of $A$, the  threshold $\theta$ and the stopping criterion.

A bottleneck in designing   rank-revealing algorithms is to track  singular values above the threshold $\theta$ with acceptable accuracy.
A key problem is that when the deflation process is utilized, does the deflated process $A^{(i)}=(I-Q_iQ_i^T)A^{(i-1)}$  with $A^{(0)}=A$ maintain  $A$'s tail singular values?
If not, applying the fixed threshold $\theta$ to the deflated matrix $A^{(i)}$ will be invalid to determine the right numerical rank of the original matrix $A$.
If yes, how to approximate those singular values above $\theta$, and how to estimate the   rank decrease during the deflation step?
Moreover, in the randomized algorithm (see \cite{gu,hmt} or Theorem \ref{thm:2.1}), the start Gaussian matrix with oversampling columns can generally
guarantee a high-quality basis matrix $Q$.  If  our algorithm is free of oversampling, what factors will affect the singular value approximation? These   core problems will be studied in this paper.

 We combine the randomized QB algorithm with the deflation process to design effective rank-revealing algorithms.
 The statistical properties of  the initial  Gaussian matrix  and singular value gap ratios of $A$ are proven to be crucial to the stability of the algorithm. In addition to this, we prove that
with a fixed block size $b$ and some mild assumptions,  the deflated matrix $(I-Q_1Q_1^T)A$ has rank  $b$ less than that of $A$ and  can  still  maintain $n-b$ singular values of $A$ with acceptable absolute error. The later deflation process also preserves the property and the singular values below $\theta$ have been maintained with acceptable accuracy.
This is the guarantee for the effectiveness and robustness of our proposed algorithm.
 We also provide synthetic data and real applications in information retrieval,  background estimation of  video  to demonstrate the efficiency and effectiveness of our proposed algorithm.

{\it B. Notation}

Let ${\mathbb R}^{m\times n}$   be the set of all $m\times n$ real matrices, and denote by ${\cal R}(A), {\cal K}(A)$ the column range and kernel space of $A$, respectively.
${\cal R}_\theta(A)$ and ${\cal K}_\theta(A)$ denote  the range and the kernel spaces are determined with respect to a threshold $\theta$. For example, with the notation of
\eqref{1.0} and \eqref{1.2}, ${\cal K}_\theta (A)={\rm span}\{v_{k+1},\ldots, v_n\}$.
Let $\|\cdot\|_{2,F}$ denote the spectral and Frobenius norm, respectively, and let $\|\cdot\|_*$ and {\sf shrink}($\cdot$) be the nuclear norm and  shrinkage operation defined in \eqref{1.3}-\eqref{shrk}.

Given an $m\times n$  matrix $A$,  $A^\dag$ means the Moore-Penrose inverse of $A$, and $\sigma_{\rm max}(A)$(or $\sigma_{\rm min}(A)$)  and $\sigma_j(A)$ mean the largest (or smallest) and the $j$th largest singular value of $A$, respectively. Let the function $Q={\sf orth}(A)$ denote the orthonormalization of the columns of $A$, which can be achieved by
 calling a packaged QR factorization, say in Matlab, we call [$Q$, $\sim$ ]={\sf qr}($A$,0). If $Q$ has orthonormal columns,
 $Z={\sf reorth2}(Y-Q(Q^TY))$ means
\[
Y:=Y-Q(Q^TY),\quad  Y:={\sf orth}(Y),\quad Z= Y-Q(Q^T Y).
\]

 For a symmetric matrix $M$, the function $[V,D]={\sf eig_\downarrow}(M)$ means we compute the eigenpair of $M$ with the eigenvalues in $D$  arranged in descending order, and the columns of $V$ being corresponding  eigenvectors.
 In addition, if $W_1\in {\mathbb R}^{n\times s}$, $Z_1\in {\mathbb R}^{n\times t}$ ($s\le t$) have orthonormal columns, then the projection onto the range of
 $W_1$ is defined as ${\cal P}_{W_1}=W_1W_1^T$, and  
  the deviation degree of subspace  ${\cal R}(W_1)$ with respect to   ${\cal R}(Z_1)$ is defined as
\begin{equation}
{\mathbf d}({\cal R}(W_1), {\cal R}(Z_1))=\|(I-{\cal P}_{Z_1}){\cal P}_{W_1}\|_2.\label{dist}
\end{equation}
If $s=t$, then ${\bf d}({\cal R}(W_1), {\cal R}(Z_1))=\|{\cal P}_{W_1}-{\cal P}_{Z_1}\|_2=\|(I-{\cal P}_{W_1}){\cal P}_{Z_1}\|_2.$ (see \cite[Theorem 2.5.1]{gv2}) 

\section{Blocked randomized algorithms}

In this section, we first introduce the basic randomized SVD algorithm and its blocked version.

\begin{algorithm}
\caption{(RSVD) Basic randomized SVD algorithm}  \label{rQB}
\begin{algorithmic}
\REQUIRE $m\times n$ matrix $A$ with $m\ge n$, integer $k>0$, oversampling parameter $p\ge 4$ and $\ell=k+p\ll n$.

\ENSURE  A rank-$k$ approximation $\widehat A_k$.

\noindent 1: Draw a random $n\times \ell$  Gaussian matrix $\Omega$ via $\Omega={\sf randn}(n,\ell)$.

2: Compute an orthogonal column basis  for $A\Omega$, we denote $Q={\sf orth}(A\Omega)$.

3: Compute $B=Q^TA$.

4: Compute the SVD: $B=\widehat U_B\widehat \Sigma\widehat V^T$.

5: Compute $\widehat U=Q\widehat U_{ B}$ and get $\widehat A_k=\widehat U_{(k)}\widehat\Sigma_{(k)}\widehat V_{(k)}^T$ from truncated SVD of $QB$.
\end{algorithmic}
\end{algorithm}

The algorithm with steps 1-3 is referred as the    {\sf randQB} algorithm in \cite{mv}. It produces an projector  $QQ^T$ such that $A\approx QQ^TA$.  The theoretical results in \cite[Corollaries 10.9-10.10]{hmt} demonstrate that
 the columns of $Q$ can capture very well the main features of the range space of $A$, and these approximations can be very  accurate
if oversampling is used (usually $p\ge 4$) and   $A$ has  very rapid decaying rate in its    singular values. When the singular values decay slowly, the power scheme
$Q={\sf orth}((AA^T)^qA\Omega)$ is recommended to amplify the gap and produces   the quality of column-orthonormal basis matrix $Q$.

%
%

Within the subspace iteration framework, Gu \cite{gu} presented new
low-rank approximation error as well as the accuracy for the singular values. The bounds are further refined by Saibaba \cite{sa}. In addition, canonical angles between the exact and approximate singular subspace are also analyzed to demonstrate the high quality of $\widehat U_{(k)}$ via the power scheme. For later analysis, we summarize these results below.

\begin{theorem}{\rm (\cite[Theorem 4, Theorem 8]{sa})}\label{thm:2.1}  Let the singular value ratio $\gamma_j=\sigma_{k+1}/\sigma_j$ for $j=1,2,\ldots,k$. Assume that $Q, \widehat U, B$ are generated via Algorithm 2.1  with power scheme $Q={\sf orth}((AA^T)^qA\Omega)$, and the matrix $V^T\Omega$ has the partition
\[
V^T\Omega=\left[\begin{array}c V_{(k)}^T\Omega\\ V_{(k)\bot}^T\Omega\end{array}\right]=\left[\begin{array}l
\Omega_1\\ \Omega_2\end{array}\right]\begin{array}l k\\ n-k\end{array}.
\]
Then
\[
\begin{array}l
{\bf d}({\cal R}(\widehat U_{(k)}),{\cal R}(U_{(k)}))\le {\displaystyle\gamma_k^{2q+1}\over\displaystyle 1-\gamma_k}\|\Omega_2\Omega_1^\dag\|_2,\\[1em]
\|(I-QQ^T)A\|_{2,F}^2\le \|\Sigma_{(k)\bot}\|_{2,F}^2+\gamma_k^{4q}\|\Sigma_{(k)\bot}\Omega_2\Omega_1^\dag\|_{2,F}^2,\\[1em]
\sigma_j(A)\ge \sigma_j(Q^TA)\ge \frac{\displaystyle\sigma_j(A)}{\displaystyle\sqrt{1+\gamma_j^{4q+2}\|\Omega_2\Omega_1^\dag\|_2^2}},
\end{array}
\]
where $\widehat U_{(k)}$ contains the first $k$ columns of $\widehat U$, and for $0<\delta\ll 1$, $\|\Omega_2\Omega_1^\dag\|_2\le {\cal C}_{\delta,k,\ell}$ with the probability at least $1-\delta$. Here ${\cal C}_{\delta,k,\ell}$ is defined in  Lemma \ref{lem:3.3}.
\end{theorem}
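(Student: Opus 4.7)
The plan is to use the SVD $A=U\Sigma V^T$ to pull everything back to the block structure of $V^T\Omega$ with respect to the top-$k$ and tail singular subspaces, and then quantify how the power scheme amplifies the leading block relative to the trailing block. Concretely, I would first write
\[
Y=(AA^T)^qA\Omega = U\Sigma^{2q+1}V^T\Omega = U_{(k)}\Sigma_{(k)}^{2q+1}\Omega_1 + U_{(k)\bot}\Sigma_{(k)\bot}^{2q+1}\Omega_2.
\]
Since $\Omega_1\in{\mathbb R}^{k\times \ell}$ is the leading block of a Gaussian matrix, it has full row rank almost surely, so right-multiplying $Y$ by $(\Sigma_{(k)}^{2q+1}\Omega_1)^\dag$ preserves the column range. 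This yields the structured identity
\[
\mathcal{R}(Q)=\mathcal{R}(Y)=\mathcal{R}\!\bigl(U_{(k)}+U_{(k)\bot}F\bigr), \qquad F:=\Sigma_{(k)\bot}^{2q+1}\Omega_2\bigl(\Sigma_{(k)}^{2q+1}\Omega_1\bigr)^{\dag},
\]
from which two estimates follow by inserting/cancelling $\Sigma_{(k)}^{2q}$ and using submultiplicativity with the ratios $\gamma_j$: $\|F\|_2\le \gamma_k^{2q+1}\|\Omega_2\Omega_1^\dag\|_2$ and $\|F\Sigma_{(k)}\|_{2,F}\le \gamma_k^{2q}\|\Sigma_{(k)\bot}\Omega_2\Omega_1^\dag\|_{2,F}$. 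These are the structural inputs that drive the three conclusions.

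With the structured form in hand, the second inequality is a direct HMT-style bookkeeping step: let $P={\sf orth}(U_{(k)}+U_{(k)\bot}F)$, so $\mathcal{R}(P)\subseteq\mathcal{R}(Q)$ and $\|(I-QQ^T)A\|_{2,F}^2\le \|(I-PP^T)A\|_{2,F}^2$. Expanding the right-hand side in the $U$-basis and reducing the resulting block matrix by a Schur identity gives a diagonal contribution equal to $\|\Sigma_{(k)\bot}\|_{2,F}^2$ and a cross term bounded by $\|F\Sigma_{(k)}\|_{2,F}^2$; the second structural estimate closes the bound. For the third chain, the upper bound $\sigma_j(Q^TA)\le \sigma_j(A)$ is immediate from $\|QQ^T\|_2\le 1$, while for the lower bound I would use $\sigma_j(QQ^TA)\ge \sigma_j(PP^TA)$ on the $j$-dimensional trial subspace $\mathcal{R}(V_{(j)})$, so that a Courant--Fischer / Schur-complement reduction applied to the $2\times 2$ block form of $PP^TA$ contributes a deficit controlled by $\|F\|_2^2$, producing the factor $\gamma_j^{4q+2}\|\Omega_2\Omega_1^\dag\|_2^2$ under the square root.

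The first inequality is the subtle one. The subspace distance from $\mathcal{R}(Q)$ to $\mathcal{R}(U_{(k)})$ is bounded directly by $\|F\|_2\le \gamma_k^{2q+1}\|\Omega_2\Omega_1^\dag\|_2$ via the $\tan\Theta$-type representation above. The extra $1/(1-\gamma_k)$ factor then comes in when passing from $Q$ to the truncated factor $\widehat U_{(k)}$ obtained from the SVD of $B=Q^TA$: I would apply a Davis--Kahan $\sin\Theta$ perturbation argument to $B$, in which the relevant denominator is the gap between $\sigma_k(B)$ and $\sigma_{k+1}(A)$. Lower-bounding $\sigma_k(B)$ via the third inequality already proved and upper-bounding $\sigma_{k+1}(B)\le \sigma_{k+1}(A)$ produces a gap of at least $(1-\gamma_k)\sigma_k(A)$, which is exactly where the $1/(1-\gamma_k)$ enters after all $\sigma_k(A)$ factors cancel.

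I expect the main obstacle to be this last step: every numerator and denominator in the Davis--Kahan application has to line up so that the $\sigma_k(A)$'s cancel cleanly and one is left with a dimensionless $1/(1-\gamma_k)$ multiplier on the pure structural estimate of $\|F\|_2$. Once that gap tracking is settled, the remaining task is to cite the high-probability estimate $\|\Omega_2\Omega_1^\dag\|_2\le {\cal C}_{\delta,k,\ell}$ from Lemma \ref{lem:3.3}, which turns all three deterministic bounds into probabilistic ones.
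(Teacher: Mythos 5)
The paper does not actually prove this theorem---it is imported verbatim from \cite{sa} (building on \cite{gu,hmt})---so I am judging your reconstruction against the standard arguments in those sources. Your structural core is the right one: writing $Y=U\Sigma^{2q+1}V^T\Omega$, post-multiplying by $(\Sigma_{(k)}^{2q+1}\Omega_1)^\dag$ to expose the subspace spanned by $U_{(k)}+U_{(k)\bot}F$ with $F=\Sigma_{(k)\bot}^{2q+1}\Omega_2\Omega_1^\dag\Sigma_{(k)}^{-(2q+1)}$, and the two estimates $\|F\|_2\le\gamma_k^{2q+1}\|\Omega_2\Omega_1^\dag\|_2$ and $\|F\Sigma_{(k)}\|_{2,F}\le\gamma_k^{2q}\|\Sigma_{(k)\bot}\Omega_2\Omega_1^\dag\|_{2,F}$, are exactly how the cited proofs proceed; your treatment of the second inequality is sound. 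Two repairs are needed elsewhere. Since $\ell=k+p>k$, post-multiplication gives only ${\cal R}(U_{(k)}+U_{(k)\bot}F)\subseteq{\cal R}(Y)$, not equality, so ``the distance from ${\cal R}(Q)$ to ${\cal R}(U_{(k)})$'' must be read one-sidedly as $\|(I-QQ^T)U_{(k)}\|_2$ (the symmetric distance can be $1$ because $\dim{\cal R}(Q)=\ell>k$). And in the third inequality the index-dependent ratio $\gamma_j$ does not follow from ``a deficit controlled by $\|F\|_2^2$''---that only yields $\gamma_k^{4q+2}$ for every $j$; one must restrict the trial subspace to the leading $j$ columns of $U_{(k)}+U_{(k)\bot}F$, whose $F$-block has norm at most $\gamma_j^{2q+1}\|\Omega_2\Omega_1^\dag\|_2$, before applying Courant--Fischer.

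The genuine gap is the first inequality, precisely where you anticipated trouble. Your architecture (structural bound for the trial space, then a gap-dependent factor for the passage to the Ritz subspace $\widehat U_{(k)}$) matches Saibaba's, but the Davis--Kahan bookkeeping you describe cannot close. The gap estimate runs the wrong way: $\sigma_k(B)\le\sigma_k(A)$ always, so $\sigma_k(B)-\sigma_{k+1}(A)$ is at most, not at least, $(1-\gamma_k)\sigma_k(A)$; the usable lower bound is $\sigma_k(A)/\sqrt{1+\gamma_k^{4q+2}\|\Omega_2\Omega_1^\dag\|_2^2}-\sigma_{k+1}(A)$, and that extra square root must be absorbed. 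More seriously, a Wedin/Davis--Kahan application to $B=Q^TA$ (or to $QQ^TA$ versus $A$) produces a numerator of order $\|(I-QQ^T)A\|_2\approx\sigma_{k+1}$, so after dividing by a gap of order $(1-\gamma_k)\sigma_k$ you get only $\gamma_k/(1-\gamma_k)$: the power-iteration exponent $2q+1$ is lost. To retain $\gamma_k^{2q+1}$ the numerator must be the structural quantity $\|F\|_2\,\sigma_k$ (equivalently a bound on $\|U_{(k)\bot}^TQQ^TAV_{(k)}\|_2$), which the cited proof obtains via a Saad/Knyazev-type Ritz-projection lemma comparing the angle to $\widehat U_{(k)}$ with the angle to the whole trial subspace ${\cal R}(Q)$, rather than via a raw perturbation theorem. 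As proposed, this step would not deliver the stated bound.
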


\begin{algorithm}  \caption{({\sf randQB\underline{~}b}) The blocked randomized QB algorithm \cite{mv}} \label{brQB}
\begin{algorithmic}[1]
\REQUIRE $m\times n$ matrix $A$ with $m\ge n$,  block size $b$ and fixed-precision $\theta$.

\ENSURE A blocked randomized QB approximation such that $\|A-QB\|_F<\theta$.

 \STATE Initialize $Q=[\quad]; B=[\quad]$; $E=\|A\|_F^2$.

\FOR{ $i=1,2,$ $\ldots$}         
     \STATE  Set $\Omega_i={\sf randn}(n,b)$.

     \STATE   Compute $Q_i={\sf orth}(A\Omega_i)$.

     \STATE Compute $Q_i={\sf orth}(Q_i-Q(Q^TQ_i))$.

     \STATE Set $B_i=Q_i^TA$.

     \STATE Update $A=A-Q_iB_i$.

   \STATE Set $Q=[Q\quad Q_i]$, $B=\Big[{B\atop B_i}\Big]$.

 \STATE If $\|A\|_F<\theta$ then stop.

\ENDFOR
 \end{algorithmic}
\end{algorithm}

 Algorithm \ref{brQB} is the blocked variant \cite{mv} of randomized QB process.
The blocked version is exactly the blocked  modified Gram-Schmidt scheme of $A[\Omega_1~ \ldots~ \Omega_i]$ with reorthognalization.
By   replacing step 4,  steps 7 and 9 with

4':  \quad $Q_i={\sf orth}(A\Omega_i-Q(B\Omega_i))$.

7': \quad $E=E-\|B_i\|_F^2$.

9': \quad   If $\|E\|_F<\theta$ then stop, \\
the storage and computational cost of the algorithm can be reduced, since  the updating of the  residual matrix in step 7 is avoided. The improved algorithm is called {\sf randQB\underline{~}EI} in \cite{ygl}.
Let $C_{\rm mm}$ and $C_{\rm qr}$ denote the scaling constants for the cost of executing a matrix-matrix multiplication and a full QR factorization, respectively.
Then
the two dense matrices of size $m\times n$, $n\times l$ costs $C_{\rm mm}mnl$ flops, and an economical QR factorization of an $m\times n$ dense matrix costs $C_{\rm qr}mn\min\{m,n\}$ flops, and the running time of {\sf randQB\underline{~}b} and {\sf randQB\underline{~}EI}  is about
\[
\begin{array}l
T_{{\sf randQB\underline{~}b} }\sim 3C_{\rm mm}mnr+C_{\rm mm}mr^2+{2\over s}C_{\rm qr}mr^2,\\
T_{{\sf randQB\underline{~}EI} }\sim 2C_{\rm mm}mnr+C_{\rm mm}(3m+n)r^2/2+{2\over s}C_{\rm qr}mr^2,
\end{array}
\]
where   $r=sb$ is the number of columns in the final $Q$. Because $r$ is usually much smaller than $m$ and $n$, the flop count of {\sf randQB\underline{~}EI} is about 2/3 of that of
{\sf randQB\underline{~}b}.

As mentioned in the previous section, the two variants of blocked randomized QB algorithm are restricted in practical applications   since the error indicator $E$   measured in Frobenius norm might fail  in some situations.

We aim to propose an algorithm  measuring the error using the spectral norm. We first explain the principle in generating the basis matrix of ${\cal R}_\theta(A)$ block by block.
Write the SVD factors of $A=U\Sigma V^T$ as
\begin{equation}
U=[U_{1}~~ U_{2}~\ldots~ U_{h}], \quad \Sigma={\rm diag}(\Sigma_{1},\ldots, \Sigma_{h}), \quad V=[V_{1}~~ V_{2}~\ldots~ V_{h}],\label{2.2}
\end{equation}
where $\{U_{j}\}_{j=1}^{h-1}$, $\{\Sigma_{j}\}_{j=1}^{h-1}$, $\{V_{j}\}_{j=1}^{h-1}$ have $b$ columns and the column sizes of $U_{h}, \Sigma_{h}, V_{h}$  depend on the dimension of the matrix $A$. Suppose that within the threshold $\theta$, the numerical range of $A$ can be characterized as
\begin{equation}
{\cal R}_\theta(A)={\rm span}\{U_{1}, U_{2},\ldots, U_{s}\}.\label{2.3}
\end{equation}
Without loss of generality, the numerical rank $k$ is assumed to  be a multiple of $b$, i.e., $k=sb$.
Clearly,
\begin{equation}
A=U_1\Sigma_{1}V_1^T+U_2\Sigma_{2}V_2^T+\cdots+U_s\Sigma_{s}V_s^T+\cdots+U_h\Sigma_{h}V_h^T,\label{2.4}
\end{equation}
and the matrix $A-U_1U_1^TA$ has the same set of singular value matrix  $\{\Sigma_{j}\}_{j=2}^h$ as those of $A$ except that its first largest $b$ singular values are replaced by 0. Thus, within the same threshold $\theta$,
\begin{equation}
{\cal R}_\theta(A-U_1U_1^TA)={\rm span}\{U_2,\ldots, U_s\},\quad {\rm rank}_\theta(A-U_1U_1^TA)={\rm rank}_\theta(A)-b.\label{2.5}
\end{equation}
Similarly, the matrix $A-U_1U_1^TA-U_2U_2^TA$ has $\{\Sigma_{j}\}_{j=3}^s$  and $2b$ zeros as its singular value set, and
\begin{equation}
\begin{array}{rl}
{\cal R}_\theta(A-U_1U_1^TA-U_2U_2^TA)&={\rm span}\{U_3,\ldots, U_s\},\\
{\rm rank}_\theta(A-U_1U_1^TA-U_2U_2^TA)&={\rm rank}_\theta(A-U_1U_1^TA)-b={\rm rank}_\theta(A)-2b.
\end{array}\label{2.6}
\end{equation}

In practical computations, $\{U_j\}_{j=1}^h$ are not known in advance.
To find the approximate numerical rank and numerical range space of $A$, we begin by finding a column-orthonormal matrix $Q_1\in {\mathbb R}^{m\times b}$ in the  range space of $A$ such that ${\cal R}(Q_1)\approx {\cal R}(U_1)$. This target can be accomplished efficiently by applying the randomized QB algorithm of $A$. One can also combine the power scheme to improve the  quality of column-orthonormal matrix $Q_1$. If ${\cal R}(Q_1)$ aligns well with ${\cal R}(U_1)$, the matrix  $(AQ_1)^T(AQ_1)$  has an eigenvalue matrix close to $\Sigma_{1}^2$.  Moreover, it can be shown that the numerical rank of $A-Q_1Q_1^TA$ is reduced by $b$ from that of $A$ (see the analysis in Theorem \ref{thm:3.6}).
Similarly, one can compute $Q_2$ in the numerical range of $A-Q_1Q_1^TA$, which is also in the numerical range of $A$. Here $Q_2$ and $Q_1$ are mutually column-orthonormal, since $Q_1$ is in the left kernel space of $A-Q_1Q_1^TA$.
The numerical rank of $A-Q_1Q_1^TA$ is reduced again.
 This process continues recursively and terminates when the numerical rank of $A$ is determined and a column-orthonormal basis for the numerical range is obtained.

In practical computations,  we combine the randomized QB algorithm with the power scheme to compute $Q_1, Q_2,\ldots$. However, a direct computation of the matrix $\hat A_1:=A-Q_1Q_1^TA$ is costly. Instead  we compute the sample matrix $Y_q=(\hat A_1\hat A_1^T)^q\hat A_1\Omega$ implicitly by employing a similar idea in \cite{llz}. To be precise, we choose
an $n\times b$ random Gaussian matrix, and initialize $Y_0$  as $Y_0=\hat A_1\Omega=(I-Q_1Q_1^T)(A\Omega)$, and then
the computation of $Y_q$ can be simplified as below
\[
\begin{array}{rl}
Y_1&=(\hat A_1\hat A_1^T)Y_0=(I-Q_1Q_1^T)AA^T(I-Q_1Q_1^T)Y_0\\
&=(I-Q_1Q_1^T)[AA^T(I-Q_1Q_1^T)]Y_0=(I-Q_1Q_1^T)[AA^T(I-Q_1Q_1^T)](A\Omega),\\
Y_2&=(\hat A_1\hat A_1^T)Y_1=(I-Q_1Q_1^T)AA^T(I-Q_1Q_1^T)Y_1\\
&=(I-Q_1Q_1^T)[AA^T(I-Q_1Q_1^T)][AA^T(I-Q_1Q_1^T)](A\Omega)\\
&=(I-Q_1Q_1^T)[AA^T(I-Q_1Q_1^T)]^2(A\Omega),\\
&\ldots\ldots\ldots\\
Y_q&=(I-Q_1Q_1^T)[AA^T(I-Q_1Q_1^T)]^q(A\Omega).
\end{array}
\]
This means the computation of $Y_q$  is equivalent to the power iteration on $AA^T(I-Q_1Q_1^T)$ combined with projection $I-Q_1Q_1^T$ on the final iterate,
in which the projection  $(I-Q_1Q_1^T)Z$ is computed $Z-Q_1(Q_1^TZ)$ instead for the sake of less flops.
The computational cost of explicit and implicit schemes is listed in Table  \ref{table1}.
   Usually $b$ is much smaller than $n$,  we see that the number of flops is reduced significantly in the implicit  formulation  of $Y_i$.

We describe the blocked low-rank approximation algorithm ({\sf blarank}) in Algorithm \ref{blarank}, in which
the approximate basis matrix $Q$ of ${\cal R}(A)$ is initialized as empty and then is incrementally built through $Q:=[Q~~Q_i]$.
The power iteration on $AA^T(I-Q_iQ_i^T)$ is implemented on the orthogonal QR factor.
This  will  increase the computational cost, but can maintain numerical stability \cite[Appendix]{gu}.
As stated in \cite{mv}, in floating point operations, a direct power scheme with big $q$ will  cause
substantial loss of accuracy  whenever the singular values of $A$ have a large dynamic range.
The applications of $AA^T$ on $(I-QQ^T)A\Omega_i$ and a small block size $b$ reduce the range of singular values,  which helps avoiding the substantial loss of accuracy for some big values of $q$.

\renewcommand\tabcolsep{30.0pt}
\begin{table}\label{table1}
\begin{center}
\caption{Comparison of computational cost of explicit and implicit formulation of $(AA^T)^qA\Omega$}
\begin{tabular}{*{28}{c}}
\hline
     &Explicit Scheme& Implicit Scheme\\\hline
$q=0$ &$3C_{\rm mm}mnb$&$~C_{\rm mm}mnb+2C_{\rm mm}mb^2$\\
$q=1$& $5C_{\rm mm}mnb$ & $3C_{\rm mm}mnb+4C_{\rm mm}mb^2$\\
$q=2$&$7C_{\rm mm}mnb$ & $5C_{\rm mm}mnb+6C_{\rm mm}mb^2$\\\hline
\end{tabular}
\end{center}
\end{table}

\begin{algorithm} \caption{({\sf blarank}) The blocked low-rank approximation  within given threshold} \label{blarank}

\begin{algorithmic}[1]
\REQUIRE $m\times n$ matrix $A$ with $m\ge n$,  block size $b$, threshold $\theta$ and power scheme parameter $q$.

\ENSURE column-orthonormal $m\times r$  basis matrix $Q$ of  the approximate numerical range,   and   rank-$r$ approximation $\tilde A_r=QQ^TA$.

\STATE Initialize $Q=[~]$.

\FOR { $i=1,2,3,\ldots$}

\STATE Set   $\Omega_i={\sf randn}(n,b)$, $Y_0={\sf orth}(A\Omega_i)$.

\FOR {$j=1,2,\ldots,q$}

\STATE Compute $Y_{j-1}:=Y_{j-1}-Q(Q^TY_{j-1})$.

\STATE   Compute $Z={\sf orth}(A^TY_{j-1})$.

\STATE Compute $Y_j={\sf orth}(AZ)$.

\ENDFOR

\STATE  Compute $\widehat  Q_i={\sf orth}(Y_q-Q(Q^TY_q))$.\qquad

\STATE Compute $[\widehat U_i,\widehat \Lambda_i]={\sf eig_\downarrow}(\widehat  Q_i^TAA^T\widehat  Q_i)$ { and $\widehat Q_i:=\widehat  Q_i\widehat U_i$}.

\IF {minimum $t$ is found so that $\widehat \Lambda_i(t,t)<\theta^2$}

\STATE  Set  $\widehat Q_i(:,t:b)=[~]$ and $r=(i-1)b+t-1$.

\STATE   Goto step 15, and then break the $i$-loop.

\ENDIF

\STATE Compute  $ Q_i={\sf orth}\big((I-QQ^T){\widehat  Q_i}\big)$  and set $Q=[Q \quad   Q_i]$.

\ENDFOR

\end{algorithmic}

\end{algorithm}

As to the complexity analysis, we note that in {\sf blarank}, it costs $C_{\rm mm}mnb+C_{\rm qr}{mb^2}$
 in step 3, $2C_{\rm mm}m(i-1)b^2$ in step 5, $C_{\rm mm}mnb+C_{\rm qr} mb^2$ in step 6 and $C_{\rm mm}mnb+C_{\rm qr} nb^2$ step 7,    $2C_{\rm mm}m(i-1)b^2+C_{\rm qr} mb^2$ in step 9, and
 $C_{\rm mm}(mnb+nb^2)+C_{\rm eig}b^3{+C_{\rm mm}mb^2}$ in step 10. Updating $Q_i$ in step 15 needs $2C_{\rm mm}m(i-1)b^2 +C_{\rm qr}mb^2$. If the approximate numerical rank is $r=sb$, then
 the total cost is about
 \[
 \begin{array}{rl}
 T_{\sf blarank}&\sim 2C_{\rm mm}mnbs+\sum\limits_{i=1}^s\big(2C_{\rm mm}m(i-1)b^2(q+2)\big)\\
 &\qquad\qquad\qquad + sq\big(C_{\rm qr} mb^2+2C_{\rm mm}mnb\big)+3C_{\rm qr} mb^2s+C_{\rm eig}b^3s+C_{\rm mm}(mb^2s+nb^2s)\\
 &\sim C_{\rm mm}\Big(m(q+2)r^2+(2q+2)mnr+(m+n)r^2/s\Big)+(2q+3)C_{\rm qr}mr^2/s+C_{\rm eig} r^3/s^2,
 \end{array}
 \]
 in which $1\le s\le r$. When $q=0$, the cost is comparable to the one for {\sf randQB\_EI};
and when $s=r, b=1$, the algorithm generates a single vector stored in $Q_i$ in one iteration, which is similar to the  {\sf larank} algorithm \cite{llz}.
In this case it costs the least operations, however, it does not necessarily mean {\sf larank} is the most efficient, since

{\it Flop counting is a necessarily  crude approach to the measurement of program
efficiency and it ignores subscripting, memory traffic, and other overheads associated
with program execution \cite[\S 1.2.4]{gv2}}.

The blocked algorithm {\sf blarank} uses matrix-matrix multiplications and highly optimized routines for standard matrix factorizations that need less data movement and less communications. This results in a potentially efficient algorithm, whose  efficiency  will be displayed through numerous experiments in section 4 .

To obtain a numerical range space with high accuracy,  orthogonalizing $Q_i$ against $Q$
is not sufficient to maintain stability. We suggest replacing ``{\sf orth}'' in step 9 of {\sf blarank} with ``{\sf reorth2}'', as given in Algorithm \ref{sblarank}.
 We refer to the algorithm as the {\sf sblarank} algorithm.
  It is mathematically equivalent   with {\sf blarank}, but behaves more stable  in the later numerical experiments. The twice-orthogonalization operator {\sf reorth2}
  is used to make $Q_i$ to be highly orthogonal to previously generated factors $Q_1,\ldots, Q_{i-1}$ and   align ${\cal R}(Q_i)$ well with ${\cal R}(U_i)$  as much as possible.

\begin{algorithm}\caption{({\sf sblarank}) The stabilized blocked low-rank approximation within given threshold}  \label{sblarank}
\begin{algorithmic}[1]
\REQUIRE $m\times n$ matrix $A$ with $m\ge n$,  block size $b$, threshold $\theta$ and power scheme parameter $q$.

\ENSURE column-orthonormal $m\times r$  basis matrix $Q$ of  the approximate numerical range,   and   rank-$r$ approximation $\tilde A_r=QQ^TA$.

\STATE Initialize $Q=[~]$.

\FOR { $i=1,2,3,\ldots$}

\STATE Set $\Omega_i={\sf randn}(n,b)$, $Y_0={\sf orth}(A\Omega_i)$.

\FOR {$j=1,2,\ldots,q$}

\STATE Compute $Y_{j-1}:=Y_{j-1}-Q(Q^TY_{j-1})$.

\STATE   Compute $Z={\sf orth}(A^TY_{j-1})$.

\STATE Compute $Y_j={\sf orth}(AZ)$.

\ENDFOR

\STATE  Compute $\widehat Q_i={\sf reorth2}(Y_q-Q(Q^TY_q))$.\qquad

\STATE Compute $[\widehat U_i,\widehat\Lambda_i]={\sf eig_\downarrow}(\widehat Q_i^TAA^T\widehat Q_i)$ {  and $\tilde Q_i:=\widehat  Q_i\widehat U_i$}.

\IF {minimum $t$ is found so that  $\widehat\Lambda(t,t)<\theta^2$}

\STATE  Set  $\widehat  Q_i(:,t:b)=[~]$ and  $r=(i-1)b+t-1$.

\STATE   Goto step 15, and then break the $i$-loop.

\ENDIF

\STATE Compute $Q_i:={\sf orth}((I-QQ^T){ \tilde Q_i})$  and set $Q=[Q \quad Q_i]$.

\ENDFOR

\end{algorithmic}

\end{algorithm}

\begin{remark} In Algorithm  \ref{blarank} and Algorithm \ref{sblarank}, the computation of final $Q_1$ is exactly the orthogonal iteration with Rayleigh-Ritz acceleration \cite[Chapter 8.3.7]{gv2}, which aims to find better approximate left singular subspace and singular value estimation. We do not use the oversampling technique to produce a higher quality of $Y_0$ in that
we aim to determine the numerical rank and the basis for ${\cal R}_\theta(A)$. The issue of a higher accuracy of singular subspace is not our focus. A provable result  in section 3 shows that a sight mismatch of ${\cal R}(Q_1)$  with the singular subspace ${\cal R}(U_1)$  will not influence the determination of the numerical rank.
\end{remark}

\section{Approximate subspace and numerical rank decrease}
As stated in \eqref{2.2}-\eqref{2.5}, the quality of column-orthonormal matrix $Q$ plays crucial roles in the accuracy of the algorithm.
The accuracy of the computed range space depends on how well the space ${\cal R}(Q)$ aligns
with the space  ${\cal R}(U_{(k)})$ spanned by   the $k$ dominant left singular vectors of $A$. If these two spaces were to match exactly,
then the algorithm would achieve perfectly optimal accuracy.

In this section we build some vital results to shed a light on the factors affecting the quality of the basis matrix $Q$, and prove that even if
${\cal R}(Q_j)$ does not align  ${\cal R}(U_j)$ very well, then under some mild assumptions,  the algorithm can still determine the right numerical rank and an approximate numerical range space with high quality.

To see this, we first give some preliminary results for later analysis.

%


\begin{lemma}{\rm(\cite[Theorem 5.8]{gu})}\label{lem:3.3} Let $\Omega=\Big[{\Omega_1\atop \Omega_2}\Big]$ be an $n\times \ell$ Gaussian matrix with $\Omega_1\in {\mathbb R}^{r\times \ell}$ of full row rank.
Then for a given $0\le \delta\ll 1$,  $\|\Omega_2\|_2\| \Omega_1^\dag\|_2\le {\cal C}_{\delta,r,\ell}$ with the probability at least $1-\delta$, where
\[
{\cal C}_{\delta,r,\ell}={2e\sqrt{\ell}\over \ell-r+1 }\Big({2\over \delta}\Big)^{1\over \ell-r+1}\left(\sqrt{n-r}+\sqrt{r}+\sqrt{2\log{2\over \delta}}\right).
\]

\end{lemma}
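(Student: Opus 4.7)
The plan is to bound the two factors $\|\Omega_2\|_2$ and $\|\Omega_1^\dag\|_2$ separately and then combine them through a union bound. The key structural observation I would exploit is that since $\Omega$ has i.i.d.\ $N(0,1)$ entries and $\Omega_1,\Omega_2$ are disjoint row blocks, the two submatrices are independent standard Gaussian matrices of sizes $r\times\ell$ and $(n-r)\times\ell$. I would allocate failure probability $\delta/2$ to each event.

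For $\|\Omega_2\|_2$, I would invoke the standard Davidson--Szarek / Slepian--Gordon estimate together with Gaussian concentration. The expected spectral norm satisfies $\mathbb{E}\|\Omega_2\|_2\le\sqrt{n-r}+\sqrt{\ell}$ by Gordon's comparison theorem, and since $G\mapsto\|G\|_2$ is $1$-Lipschitz in the Frobenius metric, the Gaussian Lipschitz concentration inequality yields
$$\Pr\bigl[\|\Omega_2\|_2\ge \mathbb{E}\|\Omega_2\|_2+t\bigr]\le e^{-t^2/2}.$$
Taking $t=\sqrt{2\log(2/\delta)}$ drives the failure probability down to $\delta/2$ and produces the additive concentration term $\sqrt{n-r}+\sqrt{r}+\sqrt{2\log(2/\delta)}$ appearing inside $\mathcal{C}_{\delta,r,\ell}$ (modulo replacing $\sqrt{\ell}$ by the tighter $\sqrt{r}$ that is available because $\Omega_1$ is assumed to have full row rank, i.e.\ $\ell\ge r$).

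For $\|\Omega_1^{\dag}\|_2=1/\sigma_{\min}(\Omega_1)$, which is the real heart of the proof, I would use the explicit small-ball estimate for the least singular value of a wide rectangular Gaussian matrix (Chen--Dongarra, after Edelman). Specifically,
$$\Pr\bigl[\|\Omega_1^{\dag}\|_2\ge s\bigr]\le \frac{1}{\sqrt{2\pi}}\left(\frac{e\sqrt{\ell}}{(\ell-r+1)\,s}\right)^{\ell-r+1}.$$
Setting the right-hand side equal to $\delta/2$ and solving for $s$ gives
$$s = \frac{2e\sqrt{\ell}}{\ell-r+1}\bigl(2/\delta\bigr)^{1/(\ell-r+1)},$$
which is exactly the multiplicative prefactor in $\mathcal{C}_{\delta,r,\ell}$.

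Finally, a union bound shows that with probability at least $1-\delta$ both tail events fail simultaneously, and multiplying the two bounds then yields $\|\Omega_2\|_2\|\Omega_1^\dag\|_2\le \mathcal{C}_{\delta,r,\ell}$. The main obstacle is clearly the small-ball bound on $\sigma_{\min}(\Omega_1)$: it is obtained by working with the joint density of the singular values of a rectangular Gaussian (the Jacobian of the SVD introduces a Vandermonde-type factor $\prod_{i<j}(\lambda_i^2-\lambda_j^2)$ multiplied by $\chi^2$ densities) and carefully integrating near zero to extract the explicit constant $e\sqrt{\ell}/(\ell-r+1)$ and the correct power $(\ell-r+1)$. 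By contrast, the spectral-norm bound on $\Omega_2$ and the final union/product step are routine.
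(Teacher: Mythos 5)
The paper offers no proof of this lemma: it is imported verbatim from Gu's Theorem 5.8, so the comparison is against the standard derivation of that result rather than an argument appearing in this paper. Your overall architecture --- Chen--Dongarra for $\|\Omega_1^{\dag}\|_2=1/\sigma_{\min}(\Omega_1)$, a Gordon/Davidson--Szarek expectation bound plus Gaussian Lipschitz concentration for the spectral norm of the other block, independence of the two disjoint row blocks, and a union bound with $\delta/2$ per event --- is exactly the standard route, and your identification of the least-singular-value tail as the source of the prefactor $e\sqrt{\ell}/(\ell-r+1)$ and of the exponent $\ell-r+1$ is correct. (Your algebra in solving the Chen--Dongarra tail for $s$ does not literally yield the factor $2e$, but since your $s$ exceeds what is needed the probability statement is unaffected.)

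There is, however, one step that does not close: the additive term $\sqrt{n-r}+\sqrt{r}$. Since $\Omega_2$ is $(n-r)\times\ell$, your expectation bound gives $\mathbb{E}\|\Omega_2\|_2\le\sqrt{n-r}+\sqrt{\ell}$, and your parenthetical remark that $\sqrt{\ell}$ may be ``replaced by the tighter $\sqrt{r}$ because $\ell\ge r$'' goes in the wrong direction: full row rank of $\Omega_1$ forces $\ell\ge r$, hence $\sqrt{r}\le\sqrt{\ell}$, so you would be substituting a strictly smaller quantity for your provable one without justification. The $\sqrt{r}$ in results of this type comes from bounding the norm of the \emph{product} $\|\Omega_2\Omega_1^{\dag}\|_2$ (which is how the constant is actually used throughout this paper): conditioning on $\Omega_1$ and applying the $\|SGT\|$ estimate with $T=\Omega_1^{\dag}$ gives $\mathbb{E}\big[\|\Omega_2\Omega_1^{\dag}\|_2\,\big|\,\Omega_1\big]\le\|\Omega_1^{\dag}\|_F+\sqrt{n-r}\,\|\Omega_1^{\dag}\|_2\le(\sqrt{r}+\sqrt{n-r})\|\Omega_1^{\dag}\|_2$, the $\sqrt{r}$ being the rank of $\Omega_1^{\dag}$ entering through $\|T\|_F\le\sqrt{\operatorname{rank}T}\,\|T\|_2$. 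For the literal product of the two norms your factorization can only deliver $\sqrt{n-r}+\sqrt{\ell}$, and in fact the statement with $\sqrt{r}$ can fail outright when $\ell\gg r$: for $r=1$, $n=2$ and $\ell$ large, $\|\Omega_2\|_2\|\Omega_1^{\dag}\|_2$ concentrates near $1$ while ${\cal C}_{\delta,1,\ell}=O(1/\sqrt{\ell})\to 0$. So treat the discrepancy as a defect of the quoted statement rather than of your strategy; in the paper's actual applications the lemma is only invoked with $\ell=r=b$, where $\sqrt{r}=\sqrt{\ell}$ and your argument closes as written.
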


The results shows that when $r<\ell$, $\|\Omega_2\Omega_1^\dag\|_2$ might give a smaller bound than that for $r=\ell$.
 The theorem below displays the matrix and singular values approximation error,  the distance of the spaces spanned by   $U_{(t)}$  and $Q_1$, when the oversampling is not used in the proposed algorithm.

\begin{theorem}\label{thm:3.1} Let the SVD of an $m\times n$ matrix $A$  satisfy \eqref{1.0}-\eqref{1.2} and  $U_{(t)}$  contain the first $t$ {($t\ge b$)} columns of $U$. Given an $n\times b$ Gaussian matrix  $\Omega_1$,  let $Q_1=\widehat Q_1\widehat U_1$ and $\widehat Q_1={\sf orth}((AA^T)^qA\Omega_1)$ be generated from Algorithm \ref{blarank}, where
\[
\widehat \Omega:=V^T\Omega_1=\begin{array}l\left[\begin{array}{cc}
\widehat \Omega_{1}\\\widehat  \Omega_{2}\end{array}\right]
\end{array}
\begin{array}l b\\ n-b\end{array}
\]
and  $\widehat \Omega_{1}\in {\mathbb R}^{b\times b}$ is assumed to have full-rank. Then  $b\le t\le k$,
\begin{eqnarray}
&{}&\|(I-Q_1Q_1^T)A\|_{2,F}\le (1+\hat\tau_b^{4q}\|\widehat \Omega_{2}\widehat \Omega_{1}^{-1}\|_2^2)^{1/2}\|\Sigma_{(b)\bot}\|_{2,F}, \label{000}\\
&{}&\sigma_j\ge \sigma_{j}(Q_1^TA)\geq\frac{\displaystyle\sigma_j}{\displaystyle\sqrt{1+\hat \tau_j^{4q+2}\|\widehat\Omega_{2}\widehat\Omega_{1}^{-1}\|_{2}^{2}}}, \label{0000}\\
&{}&{\bf d}({\cal R}(Q_1),{\cal R}(U_{(t)}))\le \Big({\sigma_{t+1}\over \sigma_b}\Big)^{2q+1}\|\widehat \Omega_{2}\widehat \Omega_{1}^{-1}\|_2,\label{3.1}
\end{eqnarray}
where  $\hat \tau_j=\sigma_{b+1}/\sigma_j<1$ for $1\le j\le b$, and with $0<\delta<1$ and the probability at least $1-\delta$,
$\|\widehat \Omega_{2}\|_2\|\widehat \Omega_{1}^{-1}\|_2\le {\cal C}_{\delta,b,b}$, where ${\cal C}_{\delta,b,b}$  is defined in Lemma  \ref{lem:3.3}.
\end{theorem}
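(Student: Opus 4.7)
The plan is to reduce all three bounds to statements about the unrotated basis $\widehat Q_1$, then invoke Theorem~\ref{thm:2.1} for the first two inequalities, and carry out a short HMT-style structural calculation for the subspace bound.

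First I would observe that the Rayleigh--Ritz rotation is harmless: since $\widehat U_1$ is orthogonal, $Q_1 Q_1^T = \widehat Q_1 \widehat Q_1^T$, $\sigma_j(Q_1^T A) = \sigma_j(\widehat Q_1^T A)$, and ${\cal R}(Q_1) = {\cal R}(\widehat Q_1)$, so I may work with $\widehat Q_1 = {\sf orth}((AA^T)^q A \Omega_1)$ throughout. For \eqref{000} and \eqref{0000} I then apply Theorem~\ref{thm:2.1} with $k = b$ and no oversampling ($\ell = b$). The hypothesis that $\widehat \Omega_1 \in \mathbb R^{b \times b}$ has full rank makes $\widehat \Omega_1^\dag = \widehat \Omega_1^{-1}$, and the relevant singular-value ratios reduce to $\gamma_j = \sigma_{b+1}/\sigma_j = \hat\tau_j$. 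Inequality \eqref{0000} is then an immediate instance of the third conclusion of Theorem~\ref{thm:2.1}, and \eqref{000} follows from its Frobenius/spectral bound after pulling $\|\Sigma_{(b)\bot}\|_{2,F}$ out via $\|\Sigma_{(b)\bot} \widehat \Omega_2 \widehat \Omega_1^{-1}\|_{2,F} \le \|\Sigma_{(b)\bot}\|_{2,F}\,\|\widehat\Omega_2 \widehat\Omega_1^{-1}\|_2$.

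The substantive step is the subspace bound \eqref{3.1}. I would write $Y_q := (AA^T)^q A \Omega_1 = U \Sigma^{2q+1} \widehat \Omega$, partition $U = [U_1\ U_2]$ and $\Sigma = \diag(\Sigma_1, \Sigma_2)$ conformably with the first $b$ rows of $\widehat \Omega$, and right-multiply by the invertible $b \times b$ matrix $\widehat \Omega_1^{-1} \Sigma_1^{-(2q+1)}$ to obtain
\[
G := Y_q\, \widehat \Omega_1^{-1} \Sigma_1^{-(2q+1)} = U_1 + U_2 F, \qquad F := \Sigma_2^{2q+1} \widehat \Omega_2 \widehat \Omega_1^{-1} \Sigma_1^{-(2q+1)},
\]
which shares its column range with $\widehat Q_1$. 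From $U_1^T U_2 = 0$ and $U_i^T U_i = I$ one has $G^T G = I + F^T F \succeq I$, so the thin-QR factor $\widetilde R$ of $G$ satisfies $\|\widetilde R^{-1}\|_2 \le 1$ and
\[
{\bf d}({\cal R}(\widehat Q_1), {\cal R}(U_{(t)})) = \|(I - U_{(t)} U_{(t)}^T) \widehat Q_1\|_2 \le \|(I - U_{(t)} U_{(t)}^T) G\|_2.
\]
The assumption $t \ge b$ ensures $U_1 \subset {\cal R}(U_{(t)})$ and that the first $t-b$ columns of $U_2$ also lie in ${\cal R}(U_{(t)})$, so $(I - U_{(t)} U_{(t)}^T) G$ only picks up the rows of $F$ indexed past $t$; bounding the surviving leading diagonal by $\sigma_{t+1}^{2q+1}$, the factor $\Sigma_1^{-(2q+1)}$ by $\sigma_b^{-(2q+1)}$, and the corresponding row restriction of $\widehat \Omega_2 \widehat \Omega_1^{-1}$ by its full spectral norm delivers \eqref{3.1}. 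The high-probability estimate on $\|\widehat \Omega_2\|_2\|\widehat \Omega_1^{-1}\|_2$ is then a direct call to Lemma~\ref{lem:3.3} with $r = \ell = b$, and submultiplicativity transfers it to $\|\widehat\Omega_2\widehat\Omega_1^{-1}\|_2$.

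The main obstacle is the subspace bound at $t > b$: I need to correctly identify which rows of $F$ survive projection onto ${\cal R}(U_{(t)})^\perp$, and use the invertibility (not merely full rank) of $\widehat \Omega_1^{-1} \Sigma_1^{-(2q+1)}$ to transfer QR-based bounds on $G$ back to $\widehat Q_1$. Everything else is bookkeeping in the SVD basis and citation of Theorem~\ref{thm:2.1}.
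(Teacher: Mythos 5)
Your proposal is correct and follows essentially the same route as the paper: inequalities \eqref{000} and \eqref{0000} are read off from Theorem~\ref{thm:2.1} with $k=\ell=b$, and the subspace bound \eqref{3.1} is obtained by normalizing the sample matrix to $U\bigl[\begin{smallmatrix}I_b\\ F\end{smallmatrix}\bigr]$ with $F=\Sigma_{(b)\bot}^{2q+1}\widehat\Omega_2\widehat\Omega_1^{-1}\Sigma_{(b)}^{-(2q+1)}$ and discarding the first $t-b$ rows of $F$ under the projection. The only cosmetic difference is that you control the basis via the thin-QR factor ($\|\widetilde R^{-1}\|_2\le 1$ from $G^TG=I+F^TF\succeq I$) where the paper writes the projector $ZZ^\dag$ explicitly and bounds $\|\hat F(I+F^TF)^{-1}\hat F^T\|_2^{1/2}\le\|\hat F\|_2$ — the same fact in different clothing.
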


\begin{proof} \eqref{000} and \eqref{0000} are straightforward from Theorem \ref{thm:2.1}. For the estimate in \eqref{3.1}, we note by \eqref{dist} and \eqref{1.0} that
\begin{equation}
{\bf d}({\cal R}(Q_1),{\cal R}(U_{(t)}))=\|U_{(t)\bot}^TQ_1\|_2=\|U_{(t)\bot}^TQ_1Q_1^TU_{(t)\bot}\|_2^{1/2},\label{eq1}
\end{equation}
where $Q=[Q_1\quad Q_{1\bot}]$ is an $m\times m$ orthogonal matrix. Observe $\widehat U_1$ is an orthogonal matrix, and hence
\[
{\cal R}(Q_1)={\cal R}(\widehat Q_1)={\cal R}((AA^T)^qA\Omega_1)={\cal R}(U(\Sigma\Sigma^T)^q\Sigma\widehat \Omega_1)={\cal R}(Z),
\]
where
\begin{equation}
Z=U(\Sigma\Sigma^T)^q\Sigma\widehat \Omega\widehat \Omega_{1}^{-1}\Big(\Sigma_{(b)}^{2q+1}\Big)^{-1}=U\left[\begin{array}c I_b\\ F\end{array}\right],\quad F= (\Sigma_{(b)\bot}\Sigma_{(b)\bot}^T)^{q}\Sigma_{(b)\bot}\widehat \Omega_{2}\widehat \Omega_{1}^{-1}\Big(\Sigma_{(b)}^{2q+1}\Big)^{-1}.\label{e0}
\end{equation}

Notice that the orthogonal projector
\begin{equation}
Q_1Q_1^T=ZZ^\dag=U\left[\begin{array}c I_b\\ F\end{array}\right](I_b+F^TF)^{-1}[I_b\quad F^T]U^T.\label{e1}
\end{equation}
Set $t=b+p$ and  $\hat F=F(p+1:m-b,:)$.
Substituting above relation into \eqref{eq1},   we get
\begin{eqnarray}
{\bf d}({\cal R}(Q_1),{\cal R}(U_{(t)}))&
=&\|\hat F(I_b+F^TF)^{-1}\hat F^T\|_2^{1/2}\nonumber\\
&\le&  {\|\hat F\|_2}\le  \Big({\sigma_{t+1}/\sigma_b}\Big)^{2q+1}\|\widehat \Omega_{2}\widehat \Omega_{1}^{-1}\|_2,\nonumber
\end{eqnarray}
where the distribution of the Gaussian matrix is rotationally invariant, and hence $\|\widehat \Omega_{2}\widehat \Omega_{1}^{-1}\|_2\le {\cal C}_{\delta,b,b}$.
\end{proof}


The result shows that the large singular values of $Q_1^TA$ can be computed more accurately due to a smaller singular value ratio $\hat\tau_j$.
 The singular value gap ratio $\sigma_{b+1}/\sigma_b$, the power scheme parameter $q$ and $\|\widehat \Omega_{2}\|_2\|\widehat \Omega_{1}^{-1}\|_2$ are key factors governing the distance of
subspaces ${\cal R}(U_{(b)})$ and ${\cal R}(Q_1)$. The distance  ${\bf d}({\cal R}(U_{(b)}), {\cal R}(Q_1))$ might not be  very small due to
a possible large factor  $\|\widehat \Omega_{2}\|_2\|\widehat \Omega_{1}^{-1}\|_2$. However, if we take ${\cal R}(U_{(t)})$ with $t=k$, i.e., the numerical range space ${\cal R}_\theta(A)$, then the distance might be small because of a possible smaller factor $\sigma_{k+1}/\sigma_b$. This means that even ${\cal R}(Q_1)$ does not align well with ${\cal R}(U_{(b)})$, $Q_1$  might be still a good basis of  ${\cal R}_\theta(A)$. That is why we use non-oversampling strategy in the randomized algorithm for computing $Q_1$.
 In addition, when an unfortunate choice of $\Omega$ causes
    a bad mismatch of ${\cal R}(Q_1)$ with   ${\cal R}(U_{(b)})$, it means
some columns in $Q_1$  might capture  a fraction of the range spanned by the singular vectors associated with the tail singular values.
In the forthcoming theorem, we will prove that if this fraction  can be controlled below a certain accuracy, then it will not affect the algorithm to determine
the numerical rank and numerical range of $A$.
To see this, we first recall a lemma in \cite[Theorem 4.2]{llz}.

\begin{lemma}[\cite{llz},Theorem 4.2]\label{lem:3.2} Let $A\in {\mathbb R}^{m\times n}$ assert the SVD: $A=U\Sigma V^T$ whose singular values satisfy (\ref{1.2}). Given an $m\times j$ column-orthonormal matrix $W$ whose column vectors belong to ${\cal R}_\theta(A)$. Then the singular values of $A-WW^TA$ can be indexed as $\sigma_1',\sigma_2',\ldots,\sigma_n'$ such that
\[
\begin{array}l
\sigma_1\ge \sigma_1'\ge \cdots \ge \sigma_{k-j}'\ge \sigma_k,\quad  \sigma_{k-j+1}'=\sigma_{k-b+2}'=\cdots=\sigma_{k}'=0,\\
and \quad \sigma_i'=\sigma_i\quad for\quad i=k+1,\ldots,n.
\end{array}
\]
\end{lemma}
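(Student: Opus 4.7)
The plan is to parameterize $W$ through the SVD of $A$, write $A-WW^TA$ in a block-diagonal form relative to the SVD bases, and then read off the singular values directly, invoking a standard interlacing result only at the end.

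First, I would parameterize $W$. Since the columns of $W$ lie in $\mathcal{R}_\theta(A)=\mathrm{span}(U_{(k)})$, there exists $C\in\mathbb{R}^{k\times j}$ with $W=U_{(k)}C$, and the column-orthonormality of $W$ and $U_{(k)}$ forces $C^TC=I_j$. Complete $C$ to an orthogonal matrix $[C\ C_\perp]\in\mathbb{R}^{k\times k}$, so that $I_k-CC^T=C_\perp C_\perp^T$. Using $U_{(k)}^TA=\Sigma_{(k)}V_{(k)}^T$, one obtains $WW^TA=U_{(k)}CC^T\Sigma_{(k)}V_{(k)}^T$, hence
\[
A-WW^TA \;=\; U_{(k)}C_\perp\bigl(C_\perp^T\Sigma_{(k)}\bigr)V_{(k)}^T \;+\; U_{(k)\bot}\Sigma_{(k)\bot}V_{(k)\bot}^T.
\]
The two summands have left ranges in the orthogonal subspaces $\mathcal{R}(U_{(k)})$ and $\mathcal{R}(U_{(k)\bot})$ and right ranges in the orthogonal subspaces $\mathcal{R}(V_{(k)})$ and $\mathcal{R}(V_{(k)\bot})$, so they can be merged into a single factorization
\[
A-WW^TA \;=\; [\,U_{(k)}C_\perp\ \ U_{(k)\bot}\,]\begin{bmatrix} C_\perp^T\Sigma_{(k)} & 0\\[2pt] 0 & \Sigma_{(k)\bot}\end{bmatrix} V^T,
\]
in which the left factor has orthonormal columns and the right factor is the orthogonal matrix $V^T$. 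The singular values of $A-WW^TA$ therefore coincide with those of the block-diagonal middle matrix.

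Reading off the middle matrix block by block gives the conclusion. The diagonal block $\Sigma_{(k)\bot}$ contributes exactly $\sigma_{k+1},\ldots,\sigma_n$, proving $\sigma_i'=\sigma_i$ for $i\ge k+1$. The first block $C_\perp^T\Sigma_{(k)}$ is $(k-j)\times k$, so at most $k-j$ of its singular values are nonzero and the remaining $j$ are zero, accounting for $\sigma_{k-j+1}'=\cdots=\sigma_k'=0$. For the surviving $k-j$ singular values, labelled $\sigma_1'\ge\cdots\ge\sigma_{k-j}'$, I would invoke the classical Cauchy/Poincar\'e interlacing theorem for singular values under orthogonal compression (or equivalently the Courant--Fischer min--max characterization applied to $\Sigma_{(k)}^TC_\perp C_\perp^T\Sigma_{(k)}$): since $C_\perp^T$ has orthonormal rows and removes a $j$-dimensional subspace, one obtains $\sigma_i\ge\sigma_i'\ge\sigma_{i+j}$ for $i=1,\ldots,k-j$, and in particular $\sigma_1\ge\sigma_1'$ and $\sigma_{k-j}'\ge\sigma_k$.

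The substantive content is concentrated in the block-diagonalization in step two; once that is in place, the assertions about the zero block and the unchanged tail are immediate, and only the ordering-preserving interlacing requires a named theorem. The likely pitfall is bookkeeping: one must track that the middle matrix has size $(m-j)\times n$ and that its $n$ singular values split as $(k-j)+j+(n-k)$, with the $j$ implicit zeros from the rank-deficient first block supplying the correct index range for the vanishing singular values rather than being miscounted against the tail $\sigma_{k+1},\ldots,\sigma_n$.
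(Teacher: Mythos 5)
Your proof is correct. Note that the paper itself offers no proof of this lemma; it is imported verbatim from Lee, Li and Zeng \cite{llz} (their Theorem 4.2), so there is nothing in this paper to compare your argument against. Your route --- writing $W=U_{(k)}C$ with $C^TC=I_j$, completing to $[C\ C_\perp]$, and exhibiting $A-WW^TA$ as a product of a column-orthonormal left factor, the block matrix $\mathrm{diag}(C_\perp^T\Sigma_{(k)},\,\Sigma_{(k)\bot})$, and $V^T$ --- is a clean, self-contained derivation: the Gram matrix $V\,\mathrm{diag}(\Sigma_{(k)}C_\perp C_\perp^T\Sigma_{(k)},\,\Sigma_{(k)\bot}^T\Sigma_{(k)\bot})\,V^T$ immediately yields the $j$ zeros and the untouched tail $\sigma_{k+1},\ldots,\sigma_n$, and the Poincar\'e separation bound $\sigma_i\ge\sigma_i'\ge\sigma_{i+j}$ on the compression $C_\perp^T\Sigma_{(k)}^2C_\perp$ gives both $\sigma_1\ge\sigma_1'$ and $\sigma_{k-j}'\ge\sigma_k$. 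The only caveat is the one you flag yourself: the middle matrix is $(m-j)\times n$ and has $\min(m-j,n)$ singular values, so the counting should be done on the $n\times n$ Gram matrix rather than on the middle factor directly; as written this is a harmless bookkeeping slip, not a gap. (Incidentally, the index $\sigma_{k-b+2}'$ in the statement is a typo for $\sigma_{k-j+2}'$.)
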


Lemma \ref{lem:3.2} says that if ${\cal R}(W)\subseteq {\cal R}_\theta(A)$ with $W^TW=I_j$, then the deflated matrix $(I-WW^T)A$  has  $k-j$ singular values greater than $\theta$, and its largest singular value never exceeds $\sigma_1$, the largest singular value of $A$. Moreover, the singular values of $A$ that are below $\theta$ are also the singular values of $(I-WW^T)A$, and the numerical rank gap ratio of $(I-WW^T)A$ is $\sigma_{k-j}'/\sigma_{k+1}$, and this gap ratio is always greater than the initial numerical gap  ratio $\sigma_{k}/\sigma_{k+1}$ of $A$, and enables the deflation process to be  effective for the rank-revealing algorithm.

 With Lemma \ref{lem:3.2}, in the theorem below, we prove that when a column-orthonormal matrix $Q_1$ captures a small fraction out side of ${\cal R}_\theta(A)$, it still maintains the singular values of $A$ below $\theta$ with some certain accuracy.

\begin{theorem} \label{thm:3.6}  With the notation of Theorem \ref{thm:3.1}, suppose that
\[\epsilon={\bf d}( {\cal R}(Q_1),{\cal R}_\theta(A)) <1 \mbox{ and } \sigma_k-\epsilon\|A\|_2>\theta>\sigma_{k+1}+\epsilon \|A\|_2,\]
 then $B=(I_m-Q_1Q_1^T)A$ maintains the singular values of $A$ below $\theta$ with absolute error not exceeding $\epsilon\|A\|_2$. Moreover,
\[{\rm rank}_\theta(B)={\rm rank}_\theta(A)-b.\]
\end{theorem}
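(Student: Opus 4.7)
The plan is to reduce the theorem to Lemma~\ref{lem:3.2} by replacing the approximate basis $Q_1$, which may leak slightly outside $\mathcal{R}_\theta(A)$, with a nearby $b$-dimensional orthonormal basis $W$ lying \emph{entirely} inside $\mathcal{R}_\theta(A)$, and then paying for the substitution with a Weyl-type singular-value perturbation bound. To construct $W$, let $\mathcal{P}_\theta := U_{(k)} U_{(k)}^T$ be the orthogonal projector onto $\mathcal{R}_\theta(A)$ and set $\tilde{Q} := \mathcal{P}_\theta Q_1$. Since $\epsilon < 1$ forces $\tilde Q$ to have full column rank $b$, the matrix $W := \tilde Q (\tilde Q^T \tilde Q)^{-1/2}$ has $b$ orthonormal columns spanning a $b$-subspace of $\mathcal{R}_\theta(A)$. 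Because $I - WW^T$ annihilates $\mathcal{R}(W) \supseteq \mathcal{R}(\tilde Q)$, one checks that $(I - WW^T) Q_1 = (I - \mathcal{P}_\theta) Q_1$, so $\|(I - WW^T) Q_1\|_2 = \|(I - \mathcal{P}_\theta) Q_1\|_2 = \epsilon$. Because $WW^T$ and $Q_1 Q_1^T$ are equidimensional orthogonal projectors, the identity recorded right after \eqref{dist} gives $\|WW^T - Q_1 Q_1^T\|_2 = \|(I - WW^T) Q_1 Q_1^T\|_2 = \epsilon$.

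Next I would apply Lemma~\ref{lem:3.2} to $C := (I - WW^T) A$ with $j = b$. Arranging the singular values of $C$ in descending order as $\tau_1 \ge \cdots \ge \tau_n$, the lemma yields $\tau_i \ge \sigma_k$ for $i = 1, \ldots, k-b$, $\tau_{k-b+j} = \sigma_{k+j}(A)$ for $j = 1, \ldots, n-k$, and $\tau_i = 0$ for $i > n-b$. Writing $B - C = (WW^T - Q_1 Q_1^T) A$ yields $\|B - C\|_2 \le \epsilon \|A\|_2$, and Weyl's perturbation inequality then gives $|\sigma_i(B) - \tau_i| \le \epsilon \|A\|_2$ for every $i$. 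For indices $i = k - b + 1, \ldots, n - b$ this specialises to $|\sigma_i(B) - \sigma_{i+b}(A)| \le \epsilon \|A\|_2$, showing that the $n-k$ singular values of $B$ in those positions approximate the tail $\sigma_{k+1}(A), \ldots, \sigma_n(A)$ within the advertised error. For the rank identity, the gap hypothesis combined with Weyl gives $\sigma_i(B) \ge \sigma_k - \epsilon \|A\|_2 > \theta$ for $i \le k-b$, while $\sigma_{k-b+1}(B) \le \sigma_{k+1} + \epsilon \|A\|_2 < \theta$, so $\mathrm{rank}_\theta(B) = k - b = \mathrm{rank}_\theta(A) - b$.

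The only delicate point is obtaining the exact equality $\|WW^T - Q_1 Q_1^T\|_2 = \epsilon$ rather than a coarser inequality: an arbitrary orthonormal basis of a $b$-subspace of $\mathcal{R}_\theta(A)$ would only guarantee $\|WW^T - Q_1 Q_1^T\|_2 \ge \epsilon$, which is not enough here. Choosing $W$ as the orthonormal factor of $\mathcal{P}_\theta Q_1$ picks the particular $b$-subspace of $\mathcal{R}_\theta(A)$ best aligned with $\mathcal{R}(Q_1)$, and this optimal alignment is what produces the clean identity; once it is in hand, everything else reduces to Lemma~\ref{lem:3.2} and Weyl.
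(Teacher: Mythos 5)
Your proposal is correct and follows essentially the same strategy as the paper's proof: orthonormalize the projection of $Q_1$ onto ${\cal R}_\theta(A)$ to get a basis $W$ lying inside the numerical range (the paper's $\check Q_1$, obtained by QR of $\dot Q_1={\cal P}_\theta Q_1$ rather than your symmetric orthogonalization, but spanning the same subspace), show $\|WW^T-Q_1Q_1^T\|_2=\epsilon$, apply Lemma~\ref{lem:3.2} to the idealized deflation $(I-WW^T)A$, and transfer everything to $B$ via Weyl's inequality. The only divergence is cosmetic: you derive $\|WW^T-Q_1Q_1^T\|_2=\epsilon$ from the equidimensional-projector identity stated after \eqref{dist}, whereas the paper computes it explicitly through the stacked coefficient matrix $X$ in \eqref{e7}; both are valid.
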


\begin{proof} Decompose $Q_1$ into the form
\begin{equation}
Q_1=\dot{Q}_1+\ddot{ Q}_1=\check{Q}_1\check{R}+\hat Q_1 \hat R=[{\check Q}_1~~{\hat Q}_1]X,\quad X=\Big[{{\check R}\atop {\hat R}}\Big],\label{e7}
\end{equation}
where $\dot{Q}_1\in {\cal R}_\theta(A)$, $\ddot{Q}_1\in  {\cal K}_\theta(A^T)$, and the column-orthonormal matrices ${\check Q}_1$, $\hat{Q}_1$ are obtained from the thin  QR of
$\dot{Q}_1$ and $\ddot{Q}_1$, respectively. Here $\|\ddot{Q}_1\|_2=\epsilon$ since by \eqref{eq1},
\[\|\ddot{Q}_1\|_2=\|U_{(k)\bot}U_{(k)\bot}^TQ_1\|_2={\bf d}({\cal R}(Q_1), {\cal R}_\theta(A))\le\Big({\sigma_{k+1}\over \sigma_b}\Big)^{2q+1}\|\widehat\Omega_2\widehat\Omega_1^{-1}\|_2.\]

In \eqref{e7}, the column size of $\check Q_1$ is crucial for our analysis.
 By the properties of the singular values, it is obvious
that $|\sigma_i(Q_1)-\sigma_i(\dot Q_1)|\le \|\ddot Q_1\|_2=\epsilon$ for $i=1,2,\ldots,b$. Thus $\sigma_b(\dot Q_1)\ge 1-\epsilon$ and the column-orthonormal matrix $\check Q$ has $b$ column vectors. The column size of $\hat Q_1$  depends on the properties of $\ddot Q_1$, and we denote it as  $s$.  Clearly,  $\hat R$ is an upper trapezoidal  matrix  of size $s\times b$.

It is obvious  that  the relation $Q_1^TQ_1=I_b$ implies $X^TX=I_b$.  By setting   $Y=\big[{I_b\atop 0_{s\times b}}\big]$, we get from \eqref{e7} and \eqref{dist} that
\[
\begin{array}{rl}
{\bf d}({\cal R}(Q_1), {\cal R}({\check Q}_1))&=\|Q_1Q_1^T-{\check Q}_1{\check Q}_1^T\|_2=\|[{\check Q}_1~{\hat Q}_1](XX^T-YY^T)[{\check Q}_1~{\hat Q}_1]^T\|_2\\
 &=\|XX^T-YY^T\|_2=\|X^T\big[{0_{b\times s}\atop I_s}\big]\|_2=\|\hat R\|_2=\|\ddot Q_1\|_2{=\epsilon}.
 \end{array}
\]

Let    $\check B=(I_m-\check Q_1\check Q_1^T)A$. Then
\begin{equation}
\|\check B-B\|_2=\|(Q_1Q_1^T-{\check Q}_1{\check Q}_1^T)A\|_2\le \epsilon \|A\|_2.\label{e8}
\end{equation}
By applying Lemma \ref{lem:3.2}, we know that $\check B$ has $b$ zero singular values, and $k-b$ singular values between $\sigma_1$ and $\sigma_k$, and $n-k$ singular values that are identical to the smallest $n-k$ singular values of $A$. Reindexing these singular values $\{\check \sigma_i\}$ as
\[
\sigma_1\ge \check \sigma_1\ge\cdots\ge  \check \sigma_{k-b}\ge\sigma_k>\theta>\sigma_{k+1}\ge\check \sigma_{k-j+1}\ge  \cdots\ge \check\sigma_{n-b+1}=\cdots=\check\sigma_{n}=0,
\]
where $\{\check \sigma_j\}_{j=k-b+1}^n$ consists of  $n-k$ singular values of $A$ below $\theta$ and $b$ zero values.

 If we denote by $\{\tilde \sigma_i\}$ the singular values of $B$, then by \eqref{e8} and the perturbation theory for singular values,
$|\tilde\sigma_i-\check\sigma_i|\le \|B-\check B\|_2\le \epsilon \|A\|_2$ for $i=1,2,\ldots,n$. This proves that $B$ maintains the singular values of $A$ below $\theta$ with absolute error not exceeding $\epsilon \|A\|_2$.

For the singular values of $B$, we observe that
$\tilde\sigma_{k-b}$, $\tilde\sigma_{k-b+1}$  lie in the intervals as
 \begin{equation}
 \begin{array}l
\tilde\sigma_{k-b}\in  [\check\sigma_{k-b}-\epsilon \|A\|_2, ~\check \sigma_{k-b}+\epsilon \|A\|_2],\quad \mbox{with}\quad \check\sigma_{k-b}\ge \sigma_k,\\
\tilde\sigma_{k-b+1}\in  [\check\sigma_{k-b+1}-\epsilon \|A\|_2,~ \check \sigma_{k-b+1}+\epsilon \|A\|_2],\quad \mbox{with}\quad \check\sigma_{k-b+1}\le \sigma_{k+1},
\end{array}\label{3.5}
 \end{equation}
 from which we derive that
\[
 \tilde\sigma_{k-b}\ge \sigma_k-\epsilon\|A\|_2>\theta> \sigma_{k+1}+\epsilon \|A\|_2\ge \check \sigma_{k-b+1}+\epsilon \|A\|_2\ge \tilde\sigma_{k-b+1}.
\]
This implies that   ${\rm rank}_\theta (B)=k-b={\rm rank}_\theta (A)-b$, and the proof is complete.
\end{proof}

With a similar technique, we get the following result.

\begin{corollary}\label{cor:3.8} Let the SVD of the $m\times n$ matrix $A$  satisfy \eqref{1.0}-\eqref{1.2}, and  $Q_{[\ell]}=[Q_1~Q_2~\ldots~ Q_\ell]$ with $Q_i\in {\mathbb R}^{m\times b}$ being computed via the {\sf blarank} or {\sf sblarank} algorithm.
If the orthogonal projection $\ddot Q_{[\ell]}$ of $Q_{[\ell]}$ on the numerical left kernel space ${\cal K}_\theta(A^T)$ of $A$ satisfies $\|\ddot Q_{[\ell]}\|_2={\bf d}({\cal R}(Q_{[\ell]}), {\cal R}_\theta(A))=\epsilon<1$ and
$\sigma_{k+1}+\epsilon\|A\|_2<\theta<\sigma_k-\epsilon\|A\|_2$, then
\[
{\rm rank}_\theta({\cal P}_\ell^\bot A)={\rm rank}_\theta(A)-b\ell,
\]
where ${\cal P}_\ell^\bot=I-Q_{[\ell]}Q_{[\ell]}^T$ and ${\cal P}_\ell^\bot A$ maintains the singular values of $A$ below $\theta$ with absolute error not exceeding $\epsilon\|A\|_2$.
Moreover, let $U_{[\ell]}=[U_1~U_2~\ldots~ U_\ell]$ with $U_i$ being defined in \eqref{2.2}, and if
\[
\epsilon_\ell={\bf d}({\cal R}(Q_{[\ell]}),{\cal R}(U_{[\ell]}))<1, \mbox{ and } \sigma_{b\ell}-\sigma_{b\ell+1}>2\epsilon_{\ell}\|A\|_2,
\]
  then
${\cal P}_\ell^\bot A$ maintains  the singular values of $A$ below $\sigma_{b\ell}$; and also those of $\tilde {\cal P}_\ell^\bot A$ with absolute error not exceeding $\epsilon_{\ell}\|A\|_2$, in which $\tilde{\cal P}_\ell^\bot=I-U_{[\ell]}U_{[\ell]}^T$.
\end{corollary}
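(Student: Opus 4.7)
The plan is to mirror the argument of Theorem \ref{thm:3.6} but with $Q_1$ replaced by the accumulated factor $Q_{[\ell]}$, reducing everything to Lemma \ref{lem:3.2} via a suitable decomposition. First I would write $Q_{[\ell]}=\dot Q_{[\ell]}+\ddot Q_{[\ell]}$ with $\dot Q_{[\ell]}\in{\cal R}_\theta(A)$ and $\ddot Q_{[\ell]}\in{\cal K}_\theta(A^T)$, and then form the thin QR decompositions $\dot Q_{[\ell]}=\check Q_{[\ell]}\check R$ and $\ddot Q_{[\ell]}=\hat Q_{[\ell]}\hat R$. The crucial bookkeeping step is verifying that $\check Q_{[\ell]}$ has exactly $b\ell$ orthonormal columns: since $\sigma_{b\ell}(Q_{[\ell]})=1$ and $\|\ddot Q_{[\ell]}\|_2=\epsilon<1$, Weyl's inequality forces $\sigma_{b\ell}(\dot Q_{[\ell]})\ge 1-\epsilon>0$. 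The orthonormality of $Q_{[\ell]}$ transfers to $X=\bigl[{\check R\atop \hat R}\bigr]$ as $X^TX=I_{b\ell}$, and the same computation as in Theorem \ref{thm:3.6} gives ${\bf d}({\cal R}(Q_{[\ell]}),{\cal R}(\check Q_{[\ell]}))=\|\hat R\|_2=\|\ddot Q_{[\ell]}\|_2=\epsilon$.

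Next I would introduce the idealized deflation $\check B=(I_m-\check Q_{[\ell]}\check Q_{[\ell]}^T)A$. Because ${\cal R}(\check Q_{[\ell]})\subseteq{\cal R}_\theta(A)$, Lemma \ref{lem:3.2} applies directly and provides a full description of the singular spectrum of $\check B$: it consists of $b\ell$ zero values, $k-b\ell$ values $\check\sigma_i$ wedged between $\sigma_i$ and $\sigma_k$ for $i\le k-b\ell$, and the lower tail $\sigma_{k+1},\ldots,\sigma_n$ of $A$ intact. The spectral closeness $\|\check B-{\cal P}_\ell^\bot A\|_2=\|(\check Q_{[\ell]}\check Q_{[\ell]}^T-Q_{[\ell]}Q_{[\ell]}^T)A\|_2\le \epsilon\|A\|_2$ then combines with Weyl's perturbation bound to yield $|\tilde\sigma_i-\check\sigma_i|\le\epsilon\|A\|_2$, which immediately shows that ${\cal P}_\ell^\bot A$ reproduces the singular values of $A$ below $\theta$ with absolute error at most $\epsilon\|A\|_2$. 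Feeding in the gap hypothesis $\sigma_{k+1}+\epsilon\|A\|_2<\theta<\sigma_k-\epsilon\|A\|_2$ separates $\tilde\sigma_{k-b\ell}>\theta>\tilde\sigma_{k-b\ell+1}$ cleanly, giving ${\rm rank}_\theta({\cal P}_\ell^\bot A)=k-b\ell$.

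For the moreover part I would bypass $\check B$ entirely and compare ${\cal P}_\ell^\bot A$ to the ``exact'' deflation $\tilde{\cal P}_\ell^\bot A=(I-U_{[\ell]}U_{[\ell]}^T)A$. The spectrum of $\tilde{\cal P}_\ell^\bot A$ is explicit: $b\ell$ zeros together with $\sigma_{b\ell+1},\ldots,\sigma_n$. The bound $\|{\cal P}_\ell^\bot A-\tilde{\cal P}_\ell^\bot A\|_2\le{\bf d}({\cal R}(Q_{[\ell]}),{\cal R}(U_{[\ell]}))\|A\|_2=\epsilon_\ell\|A\|_2$ (same two-sided projector manipulation as before, noting that both projectors have rank $b\ell$) then gives the claimed $\epsilon_\ell\|A\|_2$ error on the retained tail. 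The hypothesis $\sigma_{b\ell}-\sigma_{b\ell+1}>2\epsilon_\ell\|A\|_2$ is precisely what is needed so that the perturbation on either side of the gap (each at most $\epsilon_\ell\|A\|_2$) cannot cause a swap in the ordering of which singular values sit ``below $\sigma_{b\ell}$''.

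The main obstacle, as in Theorem \ref{thm:3.6}, is the combinatorial bookkeeping: one must verify that the $b\ell$ zero singular values produced by Lemma \ref{lem:3.2} really occupy the slots $k-b\ell+1,\ldots,k$ after reindexing, and that the perturbation of size $\epsilon\|A\|_2$ cannot push a value from below $\theta$ past the gap or vice versa. Once the reindexing is set up correctly and the ``top-$(k-b\ell)$ vs.\ bottom-$(n-k+b\ell)$'' partition is respected, the rest is essentially Weyl-type perturbation. A secondary delicacy is justifying that $\check Q_{[\ell]}$ has genuinely $b\ell$ (and not fewer) columns; this is where the strict inequality $\epsilon<1$ is used, and it would fail without it.
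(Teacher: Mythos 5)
Your proposal is correct and follows exactly the route the paper intends: the paper proves Corollary \ref{cor:3.8} only by the remark ``with a similar technique,'' meaning the decomposition-plus-Lemma \ref{lem:3.2}-plus-Weyl argument of Theorem \ref{thm:3.6} with $Q_1$ replaced by $Q_{[\ell]}$, which is precisely what you carry out, including the column-count check $\sigma_{b\ell}(\dot Q_{[\ell]})\ge 1-\epsilon>0$ and the comparison with $\tilde{\cal P}_\ell^\bot A$ for the ``moreover'' part (the latter matching how the paper later uses this bound in the proof of Theorem \ref{thm:3.7}).
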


The following theorem provides the bound for $\|{\cal P}_\ell^\bot A\|_2$ and the estimate of $\sigma_j(Q_\ell^TA)$.

\begin{theorem}\label{thm:3.7} With the notation of  Corollary \ref{cor:3.8}, let
$\Omega_\ell$ be  an $n\times b$ Gaussian matrix,   and $Q_\ell={\sf orth}((BB^T)^qB\Omega_\ell)$, where
$B={\cal P}_{\ell-1}^\bot A$ is assumed to have the SVD as $B=\widetilde U\widetilde\Sigma \widetilde V^T$ with  $\widetilde\sigma_{i}$ being its $i$th largest singular value.
Denote
  \[
 \widetilde \Omega:=\widetilde V^T\Omega_{\ell}=\left[\begin{array}c \widetilde\Omega_{1}\\ \widetilde\Omega_{2}\end{array}\right]\begin{array}l b\\ n-b\end{array}
 \]
 If
\[
\epsilon_{\ell-1}={\bf d}({\cal R}(Q_{[\ell-1]}),{\cal R}(U_{[\ell-1]}))<1,\quad \mbox{and~~}  \sigma_{b\ell}-\sigma_{b\ell+1}>2\epsilon_{\ell-1}\|A\|_2,
\]
then for $0<\delta<1$,
\[
\|{\cal P}_{\ell}^\bot A\|_{2,F}\le (1+\tilde\tau_b^{4q}\|\widetilde \Omega_{2}\widetilde \Omega_{1}^{-1}\|_2^2)^{1/2}\|{ \widetilde\Sigma_{(b)\bot}}\|_{2,F}\le
(1+\tilde\tau_b^{4q}{\cal C}_{\delta,b,b}^2)^{1/2}\|{\widetilde\Sigma_{(b)\bot}}\|_{2,F},
\]
holds with the probability at least $1-\delta$, where   ${\cal C}_{\delta,b,b}$ is
 defined in Lemma \ref{lem:3.3}, and
  \begin{equation}
  \tilde\tau_b={\widetilde\sigma_{b+1}\over\widetilde\sigma_{b}}\le {\sigma_{b\ell+1}+\epsilon_{\ell-1} \|A\|_2\over \sigma_{b\ell}-\epsilon_{\ell-1} \|A\|_2}<1,\label{eq6}
  \end{equation}
with
\begin{equation}
\begin{array}l
|\widetilde \sigma_{j}-\sigma_{b(\ell-1)+j}|\le \epsilon_{\ell-1} \|A\|_2,\quad j=1,2,\ldots, n-b(\ell-1),\\
\widetilde \sigma_{j}\le \epsilon_{\ell-1} \|A\|_2,\quad j=n-b(\ell-1)+1,\ldots, n.
\end{array}
\label{sv}
\end{equation}
 Moreover, the singular values $\sigma_{j}(Q_{\ell}^TA)$ estimated via {\sf blarank} and {\sf sblarank} satisfy
 \begin{equation}
\widetilde\sigma_{j}\ge \sigma_{j}(Q_{\ell}^TA)\geq\frac{\widetilde\sigma_j}{\sqrt{1+\|\widetilde\Omega_{2}\|_{2}^{2}
\|\widetilde\Omega_{1}^{-1}\|_{2}^{2}\left(\frac{\widetilde\sigma_{b+1}}{\widetilde\sigma_{j}}\right)^{4q+2}}}.\label{eq4}
\end{equation}
\end{theorem}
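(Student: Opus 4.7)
The plan is to reduce the entire statement to an application of Theorem~\ref{thm:3.1} to the deflated matrix $B=\mathcal{P}_{\ell-1}^\bot A$, after first controlling how the singular values of $B$ relate to those of $A$. First I would establish the singular value bounds \eqref{sv}. Let $\check B=\tilde{\mathcal{P}}_{\ell-1}^\bot A=(I-U_{[\ell-1]}U_{[\ell-1]}^T)A$. Because the columns of $U_{[\ell-1]}$ are exactly the top $b(\ell-1)$ left singular vectors of $A$ (which lie in $\mathcal{R}_\theta(A)$ under the standing assumption $b(\ell-1)\le k$), Lemma~\ref{lem:3.2} — or equivalently a direct computation from \eqref{2.2}–\eqref{2.4} — shows that the singular values of $\check B$ are exactly $\{\sigma_{b(\ell-1)+1},\ldots,\sigma_n\}$ together with $b(\ell-1)$ zeros. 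Next, since $Q_{[\ell-1]}$ and $U_{[\ell-1]}$ each have $b(\ell-1)$ orthonormal columns, the equal-dimension case of \eqref{dist} gives
\[
\|B-\check B\|_2=\|(U_{[\ell-1]}U_{[\ell-1]}^T-Q_{[\ell-1]}Q_{[\ell-1]}^T)A\|_2\le \mathbf{d}(\mathcal{R}(Q_{[\ell-1]}),\mathcal{R}(U_{[\ell-1]}))\,\|A\|_2=\epsilon_{\ell-1}\|A\|_2,
\]
and Weyl/Mirsky's singular-value perturbation bound then yields \eqref{sv} termwise. The bound \eqref{eq6} follows at once by writing $\widetilde\sigma_{b+1}\le \sigma_{b\ell+1}+\epsilon_{\ell-1}\|A\|_2$ and $\widetilde\sigma_b\ge \sigma_{b\ell}-\epsilon_{\ell-1}\|A\|_2$, with the denominator positive (hence $\tilde\tau_b<1$) exactly under the assumed gap $\sigma_{b\ell}-\sigma_{b\ell+1}>2\epsilon_{\ell-1}\|A\|_2$.

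Next I would handle the error bound on $\mathcal{P}_{\ell}^\bot A$. The key algebraic observation is that $Q_\ell$ is orthogonal to $Q_{[\ell-1]}$, so the projectors commute and
\[
\mathcal{P}_\ell^\bot A=(I-Q_\ell Q_\ell^T)(I-Q_{[\ell-1]}Q_{[\ell-1]}^T)A=(I-Q_\ell Q_\ell^T)B.
\]
Now I apply Theorem~\ref{thm:3.1} with $A$ replaced by $B$, the Gaussian matrix replaced by $\Omega_\ell$, and the partition $\widehat\Omega\leftarrow\widetilde\Omega$. The hypotheses of Theorem~\ref{thm:3.1} are inherited as follows: rotational invariance of the Gaussian distribution makes $\widetilde\Omega=\widetilde V^T\Omega_\ell$ itself Gaussian, so $\widetilde\Omega_1\in\mathbb{R}^{b\times b}$ is almost surely nonsingular; and the gap condition $\widetilde\sigma_b>\widetilde\sigma_{b+1}$, i.e.\ $\tilde\tau_b<1$, is exactly what the previous paragraph established. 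The resulting bound \eqref{000}, read for $B$ and $Q_\ell$, delivers the Frobenius/spectral estimate in the statement, and the probabilistic upper bound $\|\widetilde\Omega_2\widetilde\Omega_1^{-1}\|_2\le\mathcal{C}_{\delta,b,b}$ with probability $\ge 1-\delta$ comes directly from Lemma~\ref{lem:3.3}.

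For the singular-value estimate \eqref{eq4}, the same orthogonality $Q_\ell^TQ_{[\ell-1]}=0$ gives $Q_\ell^T A=Q_\ell^T B$, so $\sigma_j(Q_\ell^TA)=\sigma_j(Q_\ell^TB)$. Inequality \eqref{0000} of Theorem~\ref{thm:3.1}, again applied to $B$ and $Q_\ell$, yields \eqref{eq4} verbatim, with the ratios $\widetilde\sigma_{b+1}/\widetilde\sigma_j$ playing the role of $\hat\tau_j$.

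The main obstacle will be the first paragraph: correctly identifying $\check B$ as the reference matrix whose singular values are an exact reshuffle of $A$'s, and controlling $\|B-\check B\|_2$ in terms of the subspace distance $\epsilon_{\ell-1}$ between the computed and exact leading left singular subspaces. Once that perturbation step is in place, the remainder of the proof is essentially bookkeeping: applying Theorem~\ref{thm:3.1} to $B$ (where all the randomized-SVD machinery is already available) and using the orthogonality of the blocks $Q_1,\ldots,Q_\ell$ to transfer statements back from $B$ to $A$.
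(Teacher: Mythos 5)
Your proposal is correct and follows essentially the same route as the paper's proof: identify $\check B=\tilde{\cal P}_{\ell-1}^\bot A$ whose singular values are given exactly by Lemma \ref{lem:3.2}, bound $\|B-\check B\|_2\le\epsilon_{\ell-1}\|A\|_2$ via the equal-dimension projector distance and apply Weyl's perturbation bound to get \eqref{sv} and \eqref{eq6}, then apply Theorem \ref{thm:3.1} to $B$ with $\Omega_\ell$ and transfer back to $A$ through the identities ${\cal P}_\ell^\bot A=(I-Q_\ell Q_\ell^T)B$ and $Q_\ell^TB=Q_\ell^TA$. The only difference is cosmetic ordering (you prove the singular-value perturbation first, the paper states the norm bound first), so nothing further is needed.
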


\begin{proof}
By Theorem \ref{thm:3.1}, one can get
\begin{equation}
\|(I-Q_\ell Q_\ell^T)B\|_{2,F}\le  (1+\tilde\tau_b^{4q}\|\widetilde \Omega_{2}\widetilde \Omega_{1}^{-1}\|_2^2)^{1/2}\|{ \widetilde\Sigma_{(b)\bot}}\|_{2,F}\le
(1+\tilde\tau_b^{4q}{\cal C}_{\delta,b,b}^2)^{1/2}\|{\widetilde\Sigma_{(b)\bot}}\|_{2,F},\label{eq2}
\end{equation}
where
\begin{equation}
(I-Q_\ell Q_\ell^T)B=(I-Q_\ell Q_\ell^T){\cal P}_{\ell-1}^\bot A=(I-Q_{[\ell]} Q_{[\ell]}^T)A={\cal P}_{\ell}^\bot A.\label{eq5}
\end{equation}
This yields the estimates for $\|{\cal P}_{\ell}^\bot A\|_{2,F}$.

For the estimate of $\tilde \tau_b$, {note  Lemma \ref{lem:3.2} ensures}
that $\tilde {\cal P}_{\ell-1}^\bot A$ has  $\sigma_{b(\ell-1)+1},$ $\ldots, \sigma_{n}$ and other $b(\ell-1)$ zeros as its singular values, among which $\sigma_{b\ell+1}$ is the $(b+1)$th largest singular value.
With Corollary \ref{cor:3.8} and the perturbation theory of singular values, we get
\[
|\tilde\sigma_j-\sigma_{b(\ell-1)+j}|\le \|{\cal P}_{\ell-1}^\bot A-\tilde {\cal P}_{\ell-1}^\bot A\|_2=\|(Q_{[\ell-1]}Q_{[\ell-1]}^T-U_{[\ell-1]}U_{[\ell-1]}^T)A\|_2\le \epsilon_{\ell-1}\|A\|_2,
\]
where $\sigma_{b(\ell-1)+j}=0$ for $j=n-b(\ell-1)+1,\ldots,n$. This yields \eqref{sv}, and the relation in \eqref{eq6}.

 As for the estimate of $\sigma_j(Q_\ell^TA)$, note that $Q_\ell={\sf orth}((BB^T)^qB\Omega_\ell)$, it follows from  Theorem  \ref{thm:2.1} that
 \[
 \widetilde\sigma_{j}\ge \sigma_{j}(Q_{\ell}^TB)\geq\frac{\widetilde\sigma_j}{\sqrt{1+\|\widetilde\Omega_{2}\|_{2}^{2}
\|\widetilde\Omega_{1}^{-1}\|_{2}^{2}\left(\frac{\widetilde\sigma_{b+1}}{\widetilde\sigma_{j}}\right)^{4q+2}}},
\]
where $Q_{\ell}^TB=Q_\ell^TA$.  This gives the estimate in \eqref{eq4}.
\end{proof}


%
\begin{remark} The results in Theorem \ref{thm:3.7} and Corollaries \ref{cor:3.8}   show that if ${\cal R}(Q_{[\ell-1]})$ aligns ${\cal R}(U_{[\ell-1]})$ very well,
$\|(I-QQ^T)A\|_2$ approximates $\sigma_{k+1}$ up to some constants depending on $(1+\tilde\tau_b^{4q}{\cal C}_{\delta,b,b}^2)^{1/2}$ from optimal,
and the singular values of $Q_\ell^TA$ (or eigenvalues of $Q_\ell^TAA^TQ_\ell$) approximates well those of  $\Sigma_\ell$ (or $\Sigma_\ell^2$) with certain errors depending on $\epsilon_{\ell-1}\|A\|_2$ and singular value decaying rate.

Regarding the magnitude of $\epsilon_{l}$, we note that when $q=0$, $Q_{[l]}$ is essentially the column-orthogonal matrix obtained by orthogonalizing  $Y=A[\Omega_1,\ldots, \Omega_{l}]$ (denoted as $A\Omega_{[l]}$) with   block Modified Gram-Schmidt process. 
By a similar argument to that used in proving   \eqref{3.1}, we have
\[
{\bf d}({\cal R}(Q_{[l]}),{\cal R}(U_{[l]}))\le {\sigma_{bl+1}\over \sigma_{bl}}\|\widehat \Omega_{[l],2}\widehat \Omega_{[l],1}^{-1}\|_2,
\]
where $\widehat\Omega_{[l],1}, \widehat\Omega_{[l],2}$ are the first $bl$ and last $(n-bl)$ rows of $V^T\Omega_{[l]}$, respectively. A small gap ratio incurs 
a small magnitude for $\epsilon_l$. We believe the increase of $q$ will further decrease the magnitude for $\epsilon_l$.

\end{remark}

\section{Numerical experiments}
In this section, we use the {\sf blarank} and  {\sf sblarank} algorithms to calculate the numerical rank, numerical range and matrix approximation error of a matrix with a low numerical rank.
Existing {\sf larank} \cite{llz} and Lanczos-based PROPACK \cite{la} are compared for the same purpose.
Since the PROPACK can not automatically determine the singular values above the threshold,
 we preassign a target rank guess $s_0$ and compute  the $s_0$ largest singular values and corresponding
singular vectors.  If some of the computed singular values are already smaller
than $\theta$,  then $s_0$ is the right choice. Otherwise, increase $s_0$ by a predefined integer
$h$ repeatedly until some of the singular values fall below $\theta$.
 The increment of $s_k$ works very well in practice, however reruning Lanczos iterations with an increment $h$ is not economical in terms of the computational cost,
since some of the singular values have already been computed; and only a few new singular values
($h$ of them) need to be numerically evaluated.  In our experiments, we choose appropriate $s_0,h$ to ensure that the algorithm only needs  one update at most.

All the computations are carried out in Matlab R2012b on  a person HP notebook with an Intel Core i5-5200 CPU of 2.2GHz, 4GB of memory and machine precision $2.2\times 10^{-16}$.

\subsection{Synthetic Data}
We use a similar idea in \cite{llz} to construct the synthetic data.  The test matrix $A$ is of $2n\times n$ in the form $A=U\Sigma V^T$. The orthogonal matrices $U, V$ are randomly generated with appropriate sizes,
and the singular values in the diagonal of $\Sigma$ are distributed geometrically as
\[
\begin{array}c
1=\sigma_1\ge \cdots \ge \sigma_{k_1}=10^{-4}, \qquad 10^{-6}=\sigma_{k_1+1}\ge \cdots \ge \sigma_{k_2}=10^{-8},\\
  10^{-10}=\sigma_{k_2+1}\ge \cdots\ge \sigma_n=10^{-15}.
  \end{array}
\]
We generate two types of matrices:

I) $n=400, k_1=10, k_2=20$ and $\theta=10^{-5}$. The true numerical rank $k=10$.;

II)  $n=800, k_1=5, k_2=20$ and $\theta=10^{-9}$. The true numerical rank $k=20$.

\renewcommand\tabcolsep{3.5pt}
\begin{table}[!htbp]\label{table2}
	\centering
	\caption{The numerical results for Type I matrices}
	\begin{tabular}{c|cc|c|cc|cc}
			\midrule
		{Type I }&\multicolumn{2}{c|}{\sf lansvd}&{\sf larank }& \multicolumn{2}{|c|} {\sf blarank}&\multicolumn{2}{c} {\sf sblarank}  \\\cline{1-8}
$q=1$&$s_0=5$&$s_0=10$&&$b=5$&$b=10$&$b=5$&$b=10$\\\hline
time(seconds)&0.063&0.031&0.064&0.015 &0.011&  0.017& 0.015\\
crank($r$) &10&10&10&10&10&10&10\\
 $\|I-Q^TQ\|_2$&    1.04e-11 &  6.60e-13 &  1.78e-15 &  2.23e-15 &  1.70e-15 &  1.49e-15  & 1.57e-15\\
   range error& 3.84e-13 &  3.96e-13 &  3.72e-13 &  1.01e-6 &  2.67e-6 &  3.45e-6 &  1.63e-6\\
   $\|\tilde A_r-A_k\|_2$&1.00e-5&   1.00e-5 &  1.07e-15 &  1.02e-10 &  2.72e-10 &  3.46e-10&  1.66e-10\\\hline\hline
$q=2$&$s_0=5$&$s_0=10$&&$b=5$&$b=10$&$b=5$&$b=10$\\\hline
time(seconds)&0.078&0.062&0.078&0.016&          0.014 & 0.016&0.015\\
crank($r$) &10&10&10&10&10&10&10\\
 $\|I-Q^TQ\|_2$& 4.28e-12 &  1.69e-11 &  1.12e-15 &  2.27e-15 &  1.15e-15 &  2.49e-15 &  2.23e-15\\
   range error& 3.95e-13 &  4.01e-13 &  3.98e-13  & 2.67e-10 &  3.55e-11 &  1.31e-10 &  9.78e-10\\
   $\|\tilde A_r-A_k\|_2$&1.00e-5 &  1.00e-5 &  6.55e-15 &  2.67e-14 &  6.57e-15  & 1.31e-14 &  9.79e-14\\\hline\hline
$q=3$&$s_0=5$&$s_0=10$&&$b=5$&$b=10$&$b=5$&$b=10$\\\hline
 time(seconds)&0.062 &  0.047&  0.079 & 0.015 &  0.015 &  0.016 &  0.015\\
crank($r$)&10&10&10&10&10&10&10\\
 $\|I-Q^TQ\|_2$& 2.80e-13 &  4.75e-12  & 1.34e-15 &  1.34e-15 &  2.01e-15 &  1.33e-15 &  1.23e-15\\
  range error&  3.89e-13 &  3.74e-13 &  3.67e-13 &  3.70e-13 &  3.74e-13 &  3.77e-13 &  4.01e-13\\
 $\|\tilde A_r-A_k\|_2$&1.00e-05 &  1.00e-05 &  3.80e-15 &  3.99e-15 &  3.89e-15 &  3.90e-15 &  3.91e-15\\
			\midrule
		\end{tabular}
\end{table}

Let $r$ denote the computed rank (crank),  ${\cal R}(Q)$ with $Q\in {\mathbb R}^{m\times r}$ and  $\tilde A_r=QQ^TA$ denote  the computed numerical range and low-rank approximation, respectively.
With  the generated factorization $A=U\Sigma V^T$ and the partition $U=[U_{(k)}\quad Y]$,    the deviation degree
\[
{\bf d}({\cal R}(Q), {\cal R}(U_{(k)}))=\|Y^TQ\|_2
\]
measures the deviated range error of the computed range space ${\cal R}(Q)$ with respect to ${\cal R}(U_{(k)})$.

For Type I matrices, we see from Table \ref{table2} that all methods can determine the right numerical rank,  while in view of $\|\tilde A_r-A_k\|_2$, the rank-$k$  matrix approximation via {\sf lansvd}
can not be as accurate as the ones from   {\sf larank}\footnote{http://www.netlib.org/numeralgo/na46.zip}, {\sf blarank} and {\sf sblarank} methods.  That is because {\sf lansvd} can not compute the singular values very accurately, say, e.g., for $q=2$  the computed singular value $\tilde \sigma_{k_1}=9e-5$ is of low accuracy.
The orthogonality of $Q$ via  {\sf larank} and {\sf sblarank}  is  more satisfactory than that from {\sf lansvd}, while they provide range error of lower accuracy, especially when $q=1$ and $q=2$. The accuracy of the range error is enhanced when $q$ is increased to 3. This means the increase of $q$ avoids $Q$ capturing the excessive information associated with tail singular values of $A$.
Among all methods, the efficiency of {\sf blarank} and {\sf sblarank} is at least ten times higher than that of {\sf larank}.

Similar conclusions apply to Type II matrices.  The exception in Table \ref{table3}  is that in some situations,  {\sf larank}, {\sf blarank} fail to reveal the right numerical rank,
which results in bad estimates of orthogonality or range error. Empirical results also show that the power scheme parameter $q=2$ is sufficient to give a high-quality of  $\|\tilde A_r-A_k\|_2$.

\renewcommand\tabcolsep{3.5pt}
\begin{table}[!htbp]\label{table3}
	\centering
	\caption{The numerical results for Type II matrices}
	\begin{tabular}{c|cc|c|cc|cc}
			\midrule
		{Type II}&\multicolumn{2}{c|}{\sf lansvd}&{\sf larank }& \multicolumn{2}{|c|} {\sf blarank}&\multicolumn{2}{c} {\sf sblarank}  \\\cline{1-8}
$q=1$&$s_0=10$&$s_0=20$&&$b=10$&$b=20$&$b=10$&$b=20$\\\hline
time(seconds)&0.483&0.327 &29.67&0.047&0.047&0.063&0.031\\
crank($r$) &20  &  20   &   800  &    22   &   29   &   20  &    20\\
 $\|I-Q^TQ\|_2$& 1.45e-10   &  9.00e-11 &    7.80e+2  &   4.47e-15  &   4.45e-15  &   5.89e-15   &  5.96e-15\\
 range error& 3.46e-9  &   2.70e-9  &   2.12e-6   &  1.00  &   1.00  &   9.63e-7   &  8.46e-6\\
$\|\tilde A_r-A_k\|_2$&1.00e-9  &   1.00e-9  &  7.54e-5  &   8.59e-11  &   8.88e-11 &   1.74e-14  &   2.47e-13\\\hline\hline
$q=2$&$s_0=10$&$s_0=20$&&$b=10$&$b=20$&$b=10$&$b=20$\\\hline
time(seconds)&0.499  &  0.344  &   27.65 &    0.082  &   0.066 &    0.069  &  0.056\\
crank($r$) & 20   &   20  &   800  &    22  &    24  &    20  &    20\\
 $\|I-Q^TQ\|_2$&5.96e-11  &   7.48e-11  &   78.00   &  3.02e-15  &   9.88e-15  &   4.55e-15 &    2.10e-15\\
 range error& 2.82e-9  &   2.76e-9  &   4.14e-6  &   1.00  &   1.00  &   7.97e-8   &  2.75e-9\\
$\|\tilde A_r-A_k\|_2$&1.00e-9  &   1.00e-9  &   1.39e-4  &   8.37e-11  &   8.63e-11  &   2.58e-15  &   2.41e-15\\\hline\hline
$q=3$ &$s_0=10$&$s_0=20$&&$b=10$&$b=20$&$b=10$&$b=20$\\\hline
 time(seconds)&0.484  &0.359&  28.85 &0.109&0.078&0.112&0.094\\
crank($r$) &20  &  20 &  800 &   24   & 25  &  20  &  20\\
 $\|I-Q^TQ\|_2$&3.29e-11 &  8.23e-11 &  7.81e+2 & 7.91e-15 &  7.62e-15 &  5.57e-15 &  1.84e-15\\
  range error& 1.72e-9&   1.92e-9 &  1.87e-6 &  1.00&  1.00&  9.91e-7 &  1.72e-9\\
 $\|\tilde A_r-A_k\|_2$&1.00e-9  &  1.00e-9 &   8.97e-11  &  8.30e-11 &   1.80e-14 &   4.35e-15 &   1.24e-15\\
			\midrule
		\end{tabular}
\end{table}

We continue considering Type II matrix $A$. Let $r=b(\ell-1)+s$ be the computed rank via  {\sf blarank} or {\sf sblarank} algorithm, and  $Q=[Q_1~\ldots~ Q_{\ell-1}~ Q_\ell]$ be the corresponding basis matrix.
We now  display the quality of $Q$ by estimating the singular values $\hat\sigma_j$ of $A$  via those of $Q_1^TA, Q_{2}^TA, \ldots,  Q_{\ell}^TA$, respectively. Here the singular value $\sigma_i(Q_j^TA)$ is expected to be an   approximation of  $\sigma_{b(j-1)+i}$, according to the estimate in \eqref{eq4}.
In Figure \ref{Fig4.0}, we depict the relative errors $|\hat\sigma_i-\sigma_i|/\sigma_i$ for $i=1,2,\ldots, r$, with respect to different values of $b$ and $q$.
We note that the neighbouring singular values   ratio $\sigma_{i+1}/\sigma_i$ of $A$  is 0.1 for $i=1,\ldots,4$, this ratio becomes 0.01 for $i=5$, and it increases to
$0.7499$ for $i=6,\ldots, 19$ and then decreases to $0.01$  for $i=20$. When $i>20$, the ratio remains 0.985.

The depicted results show that for each algorithm, the first 5 computed singular values have accuracy close to machine precision, and the accuracy  decreases in the sequent 10 singular values whenever $b=5, 10$ or $15$.  This is because of the big singular values ratio $\sigma_{bs+1}/\sigma_{bs}$ for $bs\le 15$. For the approximate singular values, a bigger $b$ or $q$ might generate  approximations with higher-accuracy, especially for $b=15$ and the  singular values $ \sigma_i(Q_1^TA)$ far from $\sigma_b(Q_1^TA)$, even the neighbouring singular values ratio $\sigma_{16}/\sigma_{15}$ is close to 0.75.  These observations coincide with the proposed theory in  \eqref{eq4}.

We also note that for some specific $b$, say $b=5, 10$ the accuracy of computed singular values $\hat\sigma_{16}\sim\hat\sigma_{20}$ is getting enhanced for both algorithms, and the computed singular value $\hat\sigma_{16}$ has bad accuracy than $\hat\sigma_{20}$ even $\sigma_{16}$ is far from $\sigma_{20}$ among  $\sigma_{16}\sim\sigma_{20}$.
This might because the singular value ratio $\sigma_{21}/\sigma_{20}$ has been decreased to 0.01.
In {\sf blarank}, the computed singular values $\hat \sigma_{i} (i\ge 20)$  have much worse accuracy, and the computed rank is some times greater than the true numerical rank, especially for   $\hat\sigma_{20}$ with $b=15,q=2$, even the singular values ratio $\sigma_{21}/\sigma_{20}$ is as small as $0.01$.
The accuracy and computed rank are greatly enhanced
in {\sf sblarank}. This happens partly because  we use twice-orthogonalization operator {\sf reorth2} in {\sf sblarank}, while  one-orthogonalization of $Q_j$ against $Q_{[j-1]}$ in {\sf blarank} is not enough
to ensure $Q_j$ is highly orthogonal to $Q_{[j-1]}$ within the roundoff error, and in $Q_j$ there might exist  some information  associated with ${\cal R}(Q_{[j-1]})$ that makes $Q_j^TA$
generate  singular values greater than the threshold $\theta$.

\begin{figure}
\centering
\includegraphics[width=0.9\textwidth,height=6cm]{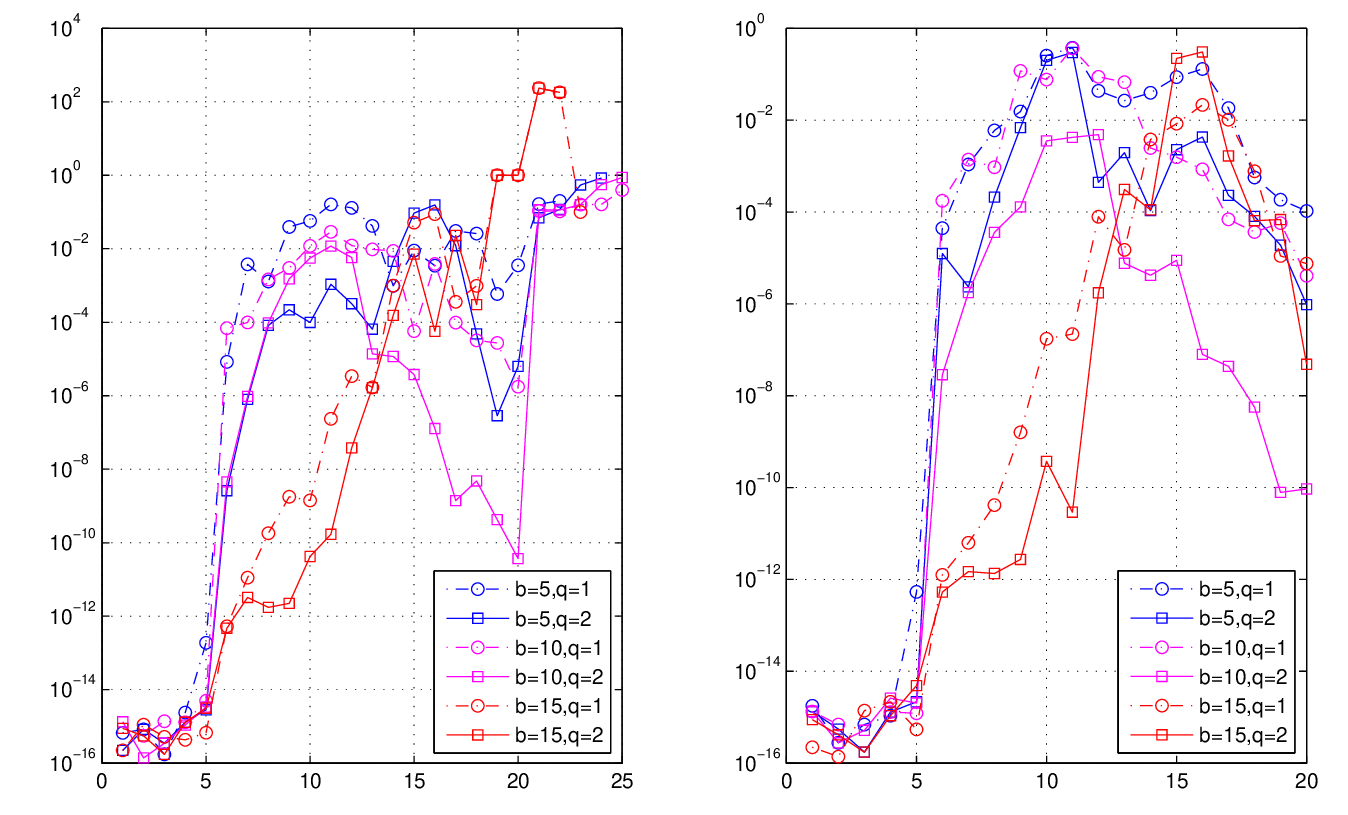}
\caption{\it The relative errors of  computed singular values $\{\hat \sigma_j\}_{j=1}^k$ with respect to $j$ for different $b$ and $q$. The left figure corresponds to the {\sf blarank} algorithm and the right is for {\sf sblarank}.} \label{Fig4.0}
 \end{figure}

\subsection{Applications in Information retrieval}
We test our algorithm with two real data. One is  from a standard document collection Cranfield \cite{co,llz2}. The collection provides about 30000 terms selected from 1400 documents, which can be described by a term-by-document matrix $A=[a_{ij}]\in {\mathbb R}^{m\times n}$, where
\[
a_{ij}=\mbox{the number of times terms $T_i$ occurs in document $D_j$}.
\]
In practical applications, the matrix  $A$ is usually contaminated with noise caused by the presentation style, ambiguity in the use of vocabulary, etc.
A method called {\it latent semantic indexing} \cite{bf,llz,zs} method uses key words to find the most relevant documents from the library database, which relies on the computation of low rank approximation for large matrices. In practical, when a set of key words is submitted, a query vector ${\bf q}$ is formed. With the rank-$k$ approximation  $QQ^TA$ of $A$, we can characterize the library database with a low-dimension subspace ${\cal R}(Q)$. In the new coordinate system,   the query vector ${\bf q}$ and each document $D_j$ can be represented by
$\hat {\bf q}=Q^T{\bf q}$ and $w_j=Q^Ta_j$, respectively.
 Moreover,
the cosine of the angle between the  query vector ${\bf q}$  and the document $D_j$ can be approximated as
\[
\cos\alpha_j={{\bf q}^Ta_j\over \|{\bf q}\|_2\|a_j\|_2}\approx {{\hat {\bf q}}^Tw_j\over \|q\|_2\|w_j\|_2}.
\]
  The larger magnitude of $\cos\alpha_j$ is, the more relevant the document $D_j$  relates to the query ${\bf q}$. Note that $\hat {\bf q}, w_j$ are vectors of length $k$, the computation is more economical in terms of $k$-dimensional vector inner product. In our test, we choose
the first 3000 terms from Cranfield  to form a term-by-document matrix $A\in {\mathbb R}^{3000\times 1400}$, whose largest 80 singular {values range}  from 68.61 to 6.12.

The other real data is the standard  color image ``Mandril'' with $512\times 512$ pixels. We construct a $1536\times 512$ real matrix $A$ by using the pixel values in red, green and blue channels of the image. Instead of storing the whole $m\times n$ matrix,  using a low-rank representation of the matrix can reduce the storage substantially with acceptable quality of the image. For example, in SVD, we use
 the dominant part $\sigma_1 u_1v_1^T+\cdots+\sigma_k u_kv_k^T$  of the matrix $A$ to compress the storage from $mn$ to
$(m+n)k$, while  for {\sf larank} and {\sf sblarank} algorithms, we only need to compute $Q$ and $Q^TA$, and then store them with the compression ratio (cratio) as
${mn\over (m+n)r}$, where $r$ denotes the computed rank.

\begin{figure}
\centering
\includegraphics[width=0.8\textwidth,height=5cm]{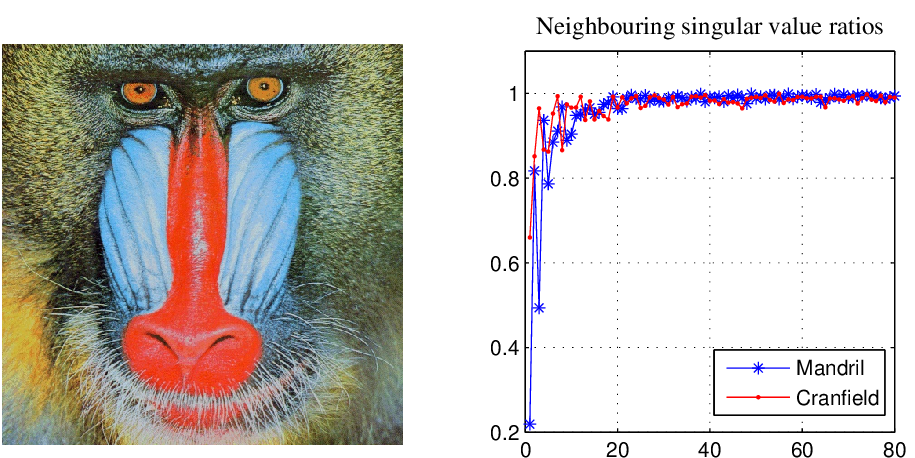}
\caption{\it The original color image of Mandril, neighbouring singular value ratio of tested matrices    }\label{Fig4.1}
 \end{figure}

In Figure \ref{Fig4.1}, for  ``Mandril'' and ``Cranfield'',  we show the original image of ``Mandril'',  and display the first 80 neighbouring singular value ratios $\sigma_{i+1}/\sigma_i$ of each matrix.
 It is seen that the ratios are generally greater than 0.9.
In Table \ref{table2}-\ref{table3}  we try to retrieve the information with  respect to the given threshold $\theta$, where ${\theta\over \|A\|_2}$ controls the percentage of the unretrieved information.

Let  ${\rm relerror}={\|\tilde A_r-A\|_2/\|A\|_2}$
 be the relative   error of the matrix approximation.
 In Table \ref{table4}, the tabulated results  show that {\sf larank} gives good estimates of the computed rank and relative error, but costs more time than {\sf sblarank}.
Perhaps it is because of the large singular value ratios that affect the accuracy of ${Q}$,
the {\sf sblarank} methods sometimes can not give   good estimates of the computed rank and relative error, especially for $\theta=20\%\|A\|_2$.   This phenomenon is also reflected in
Table \ref{table5}, where for the matrix ``Mandril'' and given threshold, different methods give different estimates of the computed rank and compression ratio for the image.
In spite of this, the compressed images via {\sf blarank} and {\sf sblarank} with $b=10, q=2$ in Figure \ref{Fig4.2} show that they have similar visual quality   for the same threshold.

\renewcommand\tabcolsep{2.0pt}
\begin{table}[!htbp]\label{table4}
	\centering
	\caption{The numerical results for Cranfield}
	\begin{tabular}{c|c|ccc|ccc|cc}
			\midrule
		Threshold &parameters&\multicolumn{3}{c|}{\sf sblarank}&\multicolumn{3}{c}{\sf larank }&\multicolumn{2}{c}{\sf svd} \\\cline{3-10}
 $\theta$&&crank&time&relerror&{crank}&time&relerror&crank&relerror\\\hline
\multirow{4}{*}{$10\%\|A\|_2$}&
$b=10, q=2$& 68&0.685&0.121&\multirow{2}{*}{71}&\multirow{2}{*}{9.346}&\multirow{2}{*}{0.102}&\multirow{4}{*}{70}&\multirow{4}{*}{0.0995}\\
&$b=10,q=3$&68&0.734&0.115&&&\\\cline{2-8}
&$b=20,q=2$&70&0.406&0.107&\multirow{2}{*}{68}&\multirow{2}{*}{9.497}&\multirow{2}{*}{0.105}\\
&$b=20,q=3$&70&0.640&0.103&&&\\\cline{1-10}
\multirow{4}{*}{$20\%\|A\|_2$}&
$b=10, q=2$& 19&0.172&0.244&\multirow{2}{*}{24}&\multirow{2}{*}{2.892}&\multirow{2}{*}{0.210}&\multirow{4}{*}{25}&\multirow{4}{*}{0.1949}\\
&$b=10,q=3$&24&0.327&0.204&&&\\\cline{2-8}
&$b=20,q=2$&26&0.202&0.197&\multirow{2}{*}{24}&\multirow{2}{*}{2.919}&\multirow{2}{*}{0.208}\\
&$b=20,q=3$&25&0.265&0.196&&&\\\cline{1-10}
			
		\end{tabular}
\end{table}

\renewcommand\tabcolsep{4.0pt}
\begin{table}[!htbp]\label{table5}
	\centering
	\caption{The numerical results for Mandril ($q=2,b=10$)}
	\begin{tabular}{c|ccc|crc|crc}
			\midrule
	\multirow{2}{*}{Threshold $\theta$} &\multicolumn{3}{|c|}{\sf svd}&\multicolumn{3}{|c|}{\sf larank}&\multicolumn{3}{|c}{\sf sblarank }\\\cline{2-10}
 &time&crank&relerror&{time}&cratio&relerror&time &cratio&relerror\\\hline
{$5\%\|A\|_2$}&0.4&9&4.52e-2&0.19&54.9:1&5.80e-2&0.05&38.4:1&4.65e-2\\
{$2\%\|A\|_2$}&0.4&50&1.98e-2&1.51&7.7:1&2.03e-2&0.09&8.2:1&2.27e-2\\\hline			
		\end{tabular}
\end{table}

\begin{figure}\label{Fig4.2}
\centering
~~~~~~\includegraphics[width=0.9\textwidth,height=7cm]{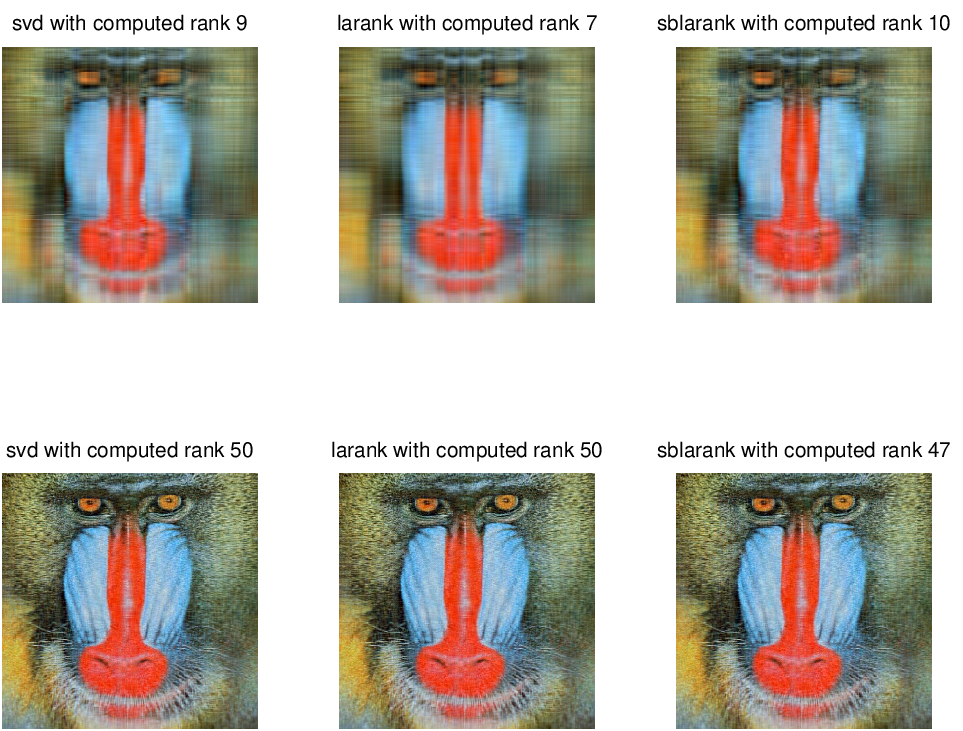}
\caption{\it Color image compression with given threshold. The first row corresponds to the threshold $5\%\|A\|_2$, and the second row corresponds to the threshold $2\%\|A\|_2$. }
 \end{figure}

 \subsection{Application in background modelling model}

 Background estimation from video sequences is a classic computer vision problem.
 It can be regarded as a  low rank approximation problem, especially when the camera motion is
presumably static, the background image sequence can be modeled as a low-dimensional
linear subspace, while  the foreground layer which is relatively sparse comparing to
the slowly changing background layer can be modeled as a sparse outlier component of
the video sequence.

Classical principal component analysis (PCA) is not robust
to the presence of sparse outliers in the data. The use of $\ell_1$ norm in the robust principal component analysis (RPCA) \cite{clmw}:
\begin{equation}
\min_{L, S}\|L\|_*+\lambda \|S\|_1, \quad s.t.\quad A=L+S,\label{rpca}
\end{equation}
can eliminate the weakness of PCA in separating the sparse outliers, where $\lambda$ is a weighting parameter, $A$ is the matrix to store each frame of the video sequence
in the vectorized form, the low-rank matrix $L$ is assumed to capture the background
information, and $S$ is  used to capture the sparse moving objects in the video sequence. The augmented Lagrange multipliers (ALM) method \cite{be}  solving  \eqref{rpca} involves three steps in the $j$th loop:

1: $L_{j+1}={\sf shrink}'(A-S_j+\mu_j^{-1}Y_j, \mu_j^{-1})$;

2: $S_{j+1}={\mathscr S}_{\lambda \mu_j^{-1}}(A-L_{j+1}-S_{j})$;

3: $Y_{j+1}=Y_j+\mu_j (A-L_{j+1}-S_{j+1})$, $\mu_{j+1}=\rho\mu_j$, \\
where  $S_0$ is the initial guess of the sparse outlier in the video,   $Y_j$ and $\mu_j$ are multipliers of Lagrange in the Lagrange function
\[
{\cal L}(L, S, Y, \mu)=\|L\|_*+\lambda \|S\|_1+{\rm trace}(Y^T(A-L-S))+{\mu\over 2}\|A-L-S\|_F^2.
\]
 Note that the shrinking operator {\sf shrink}($W, \mu_j^{-1}$) in the iteration  needs to compute the SVD of a matrix $W$ and then shrink the singular values by $\mu_j^{-1}$.
Instead of SVD, we use the {\sf sblarank} algorithm to compute an approximation $W\approx Q(Q^TW)$ and then obtain a thin  two-sided orthogonal factorization
\[
W\approx Q(Q^TW)=QLP^T
\]
 by the thin  QR
factorization of $W^TQ=PL^T$.   We shrink the matrix $W$ by using  the entrywise soft-thresholding operator ${\mathscr S}_{\tau}(\cdot)$ (see \eqref{0002}) to $L$:
\[
{\sf shrink}'(W, \mu_j^{-1})=Q{\mathscr S}_{\mu_j^{-1}}(L)P^T.
\]

\begin{center}
\begin{figure}\label{Fig4.3}
~~~~~~~~\includegraphics[width=0.9\textwidth,height=7cm]{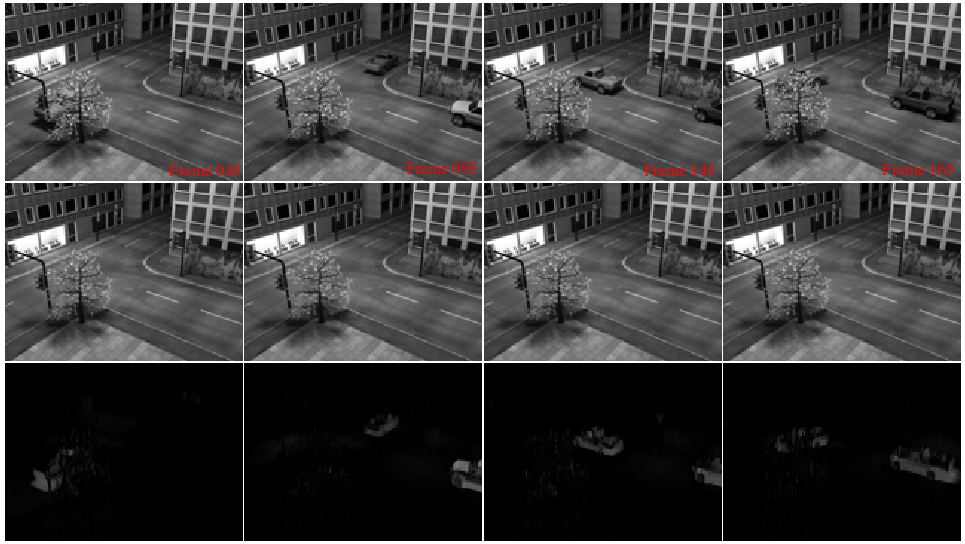}
\caption{\it Background and the foreground moving cars estimated by svd-based RPCA. The first row corresponds to the original frame, the second row is the background estimation, and the third row corresponds to the moving cars in the video.}
 \end{figure}
 \end{center}

\begin{center}
 \begin{figure}\label{Fig4.4}
 \centering
~~~~~~~\includegraphics[width=0.9\textwidth,height=11cm]{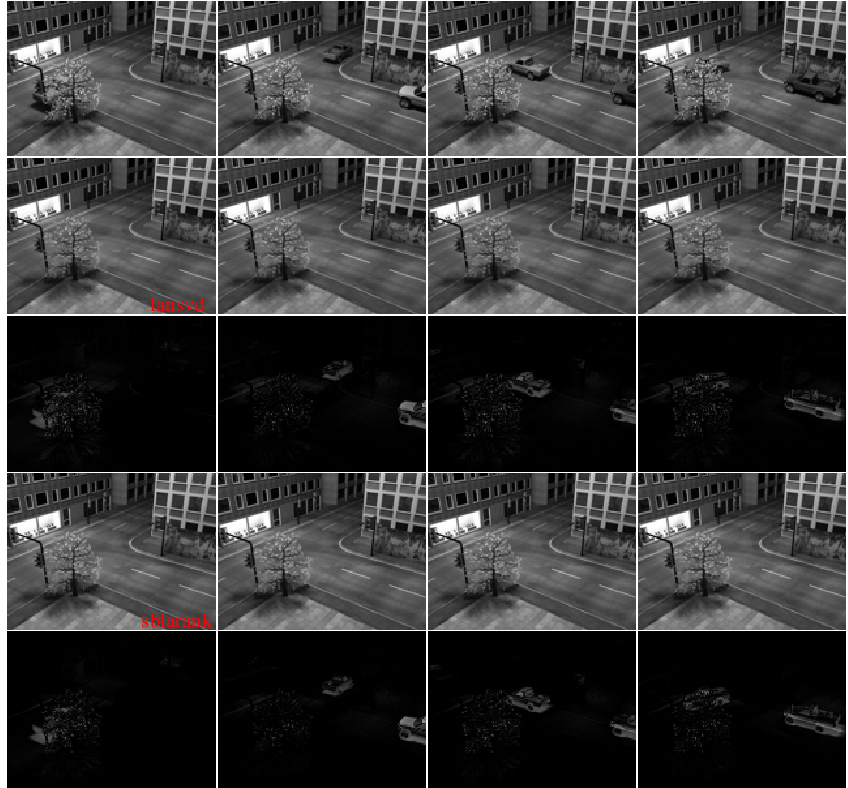}
\caption{\it  Background and the foreground moving cars estimated by  {\sf lansvd}-based RPCA (in rows 2 and 3) and {\sf sblarank}-based RPCA (in rows 4 and 5). }
 \end{figure}
 \end{center}

We use the basic scenario of Stuttgart synthetic video data set  \cite{bh} to estimate the background and the moving cars in the video.  The first 300 frames in the Basic scenario are chosen, and each frame of size
$600\times 800$ is resized to $m\times n$ with $(m,n)=(120,160)$, and then is stored as a column vector  in $A$, where the entries of $A$ range from 0 to 255.

We compare the RPCA algorithm via ${\sf svd}$, {\sf lansvd} and ${\sf sblarank}$, respectively. The maximum iteration number is 100, and other input parameters $\lambda=(mn)^{-1/2}$, $\mu_0=1e-3$ and $\rho=1.1$. For ${\sf sblarank}$, we choose $b=10$, $q=0$, since the matrix is expected to exhibit
low rank due to the spatial correlation within the background of the video.  The iteration stops if
\[
{\rm relerror}={\|A-L_j-S_j\|_F\over \|A\|_F}< 9e-5.
\]

We pick some special frames in the video, for example, in Frame 045/165, there is a car behind a tree, and in Frame 095/145,
  a car is driving into the monitoring area, while a car is leaving this area at the far end. We test whether
 the candidate methods can provide  a  high-quality background estimation and  capture the moving objects at the same time.
The modelling results are shown in Figures \ref{Fig4.3}-\ref{Fig4.4}. The {\sf sblarank}-shrinking method can provide similar visual   quality of the background to the one from
{\sf svd}-shrinking  technique, and the results in  Table \ref{table6} show that the
{\sf sblarank}-based method needs more iterations but  is still the most efficient, due to the
proposed rank prediction scheme, while {\sf lansvd} is not flexible in predicting the dimension of the leading singular space, and it costs more time in the process of dimension prediction.

\renewcommand\tabcolsep{25.0pt}
\begin{table}[!htbp]\label{table6}
	\centering
	\caption{The comparison of the methods  for background estimation of Stuttgart  videos}
	\begin{tabular}{c|ccccrccrc}
			\midrule
	 & {\sf svd}& {\sf lansvd}& {\sf sblarank }\\\hline
 time & 57.76 &120.51& 21.65\\
 iterations & 28 &30& 58\\
 relerror & 8.21e-5& 8.14e-5& 8.45e-5 \\\hline			
		\end{tabular}
\end{table}

\section{Conclusion}
This paper proposed  efficient adaptive randomized rank-revealing algorithms for fixed-threshold low-rank approximation problems.
Based on a deflation process, the algorithm generates basis matrix  $Q:=[Q~ Q_\ell]$ of $A$ block by block,
and each block can align well  with those subblocks in the left singular matrix of $A$, provided the singular value gap is big and the twice-orthogonalization
scheme is used in the algorithm. We present provable results for the rank deduce in the deflation process, as well as the estimates for
the singular values of $Q_\ell^TA$   and the  matrix
approximation error via spectral and Frobenius norms.
Although the proposed algorithm is not a kind of singular value thresholding (SVT) algorithm, it can be viewed as an approximation of SVT  and be used to solve SVT-based optimization problem.
Applied to synthetic data and some applications in information retrieval, background estimation of videos, the proposed algorithms are efficient and effective than Lanczos-based algorithms and a rank-revealing algorithm proposed by Lee, Li and Zeng.


\begin{thebibliography}{zz}



\bibitem{ber} \textsc{D. S. Bernstein}, {\it Matrix Mathematics: Theory, Facts, and Formulas}, 2nd edn. Princeton University Press,
Princeton and Oxford (2009)

\bibitem{bf} \textsc{M. W. Berry and  R. D. Fierro}, {\it Low-rank orthogonal decompositions for information retrieval
applications}, Numer. Linear Algebra Appl., 3 (1996), pp. 301-328.


\bibitem{be} \textsc{D. Bertsekas}, {\it Constrained Optimization and Lagrange Multiplier  Method}, Academic Press, (1982)



\bibitem{bh} \textsc{S. Brutzer, B. H$\ddot{\rm o}$ferlin,  and G.  Heidemann},  {\it Evaluation of background subtraction techniques for video surveillance}.
IEEE Compu. Vis. Patt. Recog.,     { 32}:14 (2011), pp. 1937-1944.


\bibitem{ccs} \textsc{J. F. Cai, E. J. Cand$\grave{\rm e}$s, and Z.W. Shen},  {\it A singular value thresholding algorithm for matrix completion}. SIAM J. Optim.,  20:4 (2010),  pp. 1956-1982.

\bibitem{clmw} \textsc{E. J. Cand$\grave{\rm e}$s, X. Li, Y. Ma,  and J. Wright}, {\it Robust principal component analysis?}  J.
 Assoc. Comput. Mach., 58 (2011), pp. 11:1-11:37.



\bibitem{ch} \textsc{T. F. Chan}, {\it Rank revealing QR factorizations}, Linear Algebra Appl., 88/89 (1987), pp. 67-82.

\bibitem{ci} \textsc{S. Chandrasekaran and I. C. F. Ipsen}, {\it On rank-revealing QR factorisations}, SIAM J. Matrix
Anal. Appl., 15 (1994), pp. 592-622.


\bibitem{co} Cornell SMART System, ftp://ftp.cs.cornell.edu/pub/smart.

\bibitem{ge} \textsc{M. Gu and S.C. Eisenstat}, {\it Efficient algorithms for computing a strong rank-revealing QR
factorization}, SIAM J. Sci. Comput., 17 (1996), pp. 848-869.














\bibitem{fh} \textsc{R. D. Fierro and P. C. Hansen}, {\it Low-rank revealing UTV decompositions}, Numer. Algorithms,
15 (1997), pp. 37-55.


\bibitem{hh} \textsc{R. D. Fierro, P. C.  Hansen, and P. S. K. Hansen}, {\it UTV tools: MATLAB templates for rank-revealing UTV
decompositions}, Numer. Algorithms,  20 (1999), pp. 165-194.











\bibitem{gv2}  \textsc{G. H. Golub and C. F. Van Loan}, {\it Matrix Computations(4ed.)},  Johns Hopkins University Press, Baltimore (2013)

\bibitem{gu} \textsc{M. Gu}, {\it Subspace iteration randomization and singular value problems}, SIAM J. Sci. Comput. 37 (2015),  pp. A1139-A1173.





\bibitem{hmt} \textsc{N. Halko, P. G.  Martinsson, and J. A. Tropp}, {\it Finding structure with randomness: probabilistic algorithms for constructing approximate matrix
decompositions},  SIAM Rev. 53 (2011), pp. 217-288.

\bibitem{hp} \textsc{Y. P. Hong and C. T. Pan}, {\it Rank-revealing QR factorizations and the singular value decomposition},
Math.  Comput., 58 (1992), pp. 213-232.



\bibitem{la} \textsc{R. M. Larsen}, {\it PROPACK-Software for large and sparse SVD calculations}, http://sun.
stanford.edu/\textasciitilde rmunk/PROPACK/


\bibitem{llz}  \textsc{T. L. Lee,  T. Y. Li, and   Z. G.  Zeng}, {\it A rank-revealing method with updating, downdating, and applications.
Part II},  SIAM J. Matrix Anal. Appl. 31 (2009), pp. 503-525.


\bibitem{llz2} \textsc{T. L. Lee, T. Y. Li, and Z. G.  Zeng}, {\it RankRev: a Matlab package for computing
the numerical rank and updating/downdating}, Numer. Algorithms,   77 (2018), pp. 559-576.


\bibitem{lwm} \textsc{E. Liberty, F. Woolfe, P. Martinsson, V. Rokhlin, and M. Tygert}, {\it Randomized
algorithms for the low-rank approximation of matrices},  Proceedings of the
National Academy of Sciences, 104(2007), pp. 20167-20172.


\bibitem{llj} \textsc{Q. H. Liu, S. T. Ling, and Z. G. Jia}, {\it Randomized quaternion singular value decomposition for low-rank matrix
approximation}, SIAM J. Sci. Comput., 44:2 (2022), pp. A870-A900.


\bibitem{ma} \textsc{M. W. Mahoney}, {\it Randomized algorithms for matrices and data}, Found. Trends Mach. Learn., 3(2011), pp. 123-224.





\bibitem{mart} \textsc{P. G. Martinsson, V. Rokhlin, and M. Tygert}, {\it A randomized algorithm for the decomposition
of matrices}, Appl. Comput. Harmon. Anal., 30 (2011),  pp. 47-68.




\bibitem{mv} \textsc{P. G. Martinsson and S. Voronin}, {\it A randomized blocked algorithm for efficiently computing rank-revealing factorization of matrices},
 SIAM J. Sci. Comput., 38(2016), pp. S485-S507.


\bibitem{pan} \textsc{C. T. Pan},  {\it On the existence and computation of rank-revealing LU factorizations},  Linear
Algebra   Appl., 316 (2000), pp. 199-222.

\bibitem{pt} \textsc{C. T. Pan and P. T. P. Tang}, {\it Bounds on singular values revealed by QR factorizations}, BIT, 39 (1999), pp. 740-756.


\bibitem{rlb} \textsc{H. Ren, R-R Ma, Q. H. Liu, and Z.-J. Bai}, {\it Randomized quaternion QLP decomposition for low-rank approximation}, J. Sci. Comput., 92 (2022), 80.


\bibitem{rxb} \textsc{H. Ren, G. Y. Xiao, and Z.-J. Bai}, {\it  Single-pass randomized QLP decomposition for low-rank approximation},
Calcolo,  59 (2022), 49.



\bibitem{sa} \textsc{A. K. Saibaba}, {\it Randomized subspace iteration: Analysis of canonical angles and unitarily invariant norms}, SIAM J. Matrix Anal. Appl.,
40:1 (2019), pp. 23-48.




\bibitem{wlr} \textsc{F. Woolfe, E. Liberty, V. Rokhlin, and M. Tygert}, {\it A fast randomized algorithm
for the approximation of matrices},  Appl.  Comput. Harm.
Anal., 25:3(2008), pp. 335-366.

\bibitem{ws} \textsc{K. Wu and H. Simon}, {\it Thick-restart Lanczos method for large symmetric eigenvalue problems},
SIAM J. Matrix Anal. Appl., 22 (2000), pp. 602-616.

\bibitem{ygl} \textsc{W. J. Yu, Y. Gu, and Y. H. Li}, {\it Efficient randomized algorithms for the fixed-precision low-rank matrix approximation}, SIAM J. Matrix Anal. Appl.,
 39 (2018), pp. 1339-1359.

\bibitem{zs} \textsc{H. Zha and H. D. Simon}, {\it On updating problems in latent semantic indexing}, SIAM J. Sci.
Comput., 21 (1999), pp. 782-791.


\end{thebibliography}
\end{document}